\title{A Regression-Based Prediction-Correction Method for Stochastic Time-Varying Optimization Problems}
\author[1]{Tomoya Kamijima}
\author[1]{Naoki Marumo}
\author[1,2]{Akiko Takeda}
\affil[1]{Department of Mathematical Informatics, The University of Tokyo, Tokyo, Japan.}
\affil[2]{RIKEN Center for Advanced Intelligence Project, Tokyo, Japan.}
\date{}
\begin{document}
\maketitle

\begin{abstract}
    In many real-world applications, optimization problems evolve continuously over time and are often subject to stochastic noise. 
    We consider a stochastic time-varying optimization problem in which the objective function $f(\bm x;t)$ changes continuously and only noisy gradient observations are available. 
    In deterministic settings, the prediction-correction method that exploits the time derivative of the solution is effective for accurately tracking the solution trajectory. 
    However, a straightforward extension to stochastic problems requires an estimate of $\nabla_{\bm xt} f(\bm x;t)$ and the computation of a Hessian inverse at each step--requirements that are difficult or costly in practice. 
    To address these issues, we propose a prediction-correction algorithm that uses a regression-based prediction step: the prediction is formed as a linear combination of recent iterates, which can be computed efficiently without estimating $\nabla_{\bm xt}f(\bm x;t)$ or computing Hessian inversions. 
    We prove a tracking-error bound for the proposed method under standard smoothness and stochastic assumptions. 
    Numerical experiments show that the regression-based prediction improves tracking accuracy while reducing computational cost compared with existing methods.
\end{abstract}

\section{Introduction}
In many real-world applications, optimization problems are not static but continuously change over time.
Such problems arise in domains like robotics \citep{derenick2009optimal,zavlanos2012network}, control \citep{cao2012online,paternain2019prediction}, online principal component analysis \citep{feng2013online,bouwmans2014robust,lois2015online}, data analysis, and optimal power flow \citep{dall2016optimal}.
More applications are given in \citet{dall2020optimization} and \citet{simonetto2020time}.
Thus, it is important to consider the time-varying optimization problem
\begin{align}
    \min_{\bm x\in\R^d}f(\bm x;t),\label{eq:problem_deterministic}
\end{align}
where $t\geq0$ is the time variable.
Problem~\cref{eq:problem_deterministic} has been actively studied since \citet{simonetto16} proposed a prediction-correction framework.
After that, various prediction-correction algorithms have been developed \citep{simonetto17,bastianello19,lin2019simplified,bastianello23,kamijima2025simple}.
In these studies, it is assumed that the function value $f(\bm x;t)$ and its gradient $\nabla_{\bm x} f(\bm x;t)$ can be computed exactly.

This paper focuses on the following stochastic variant:
\begin{gather}
    \min_{\bm x\in\R^d}f(\bm x;t)\coloneqq\E_{\bm y\sim \mathcal P(t)}[\ell(\bm x;\bm y)]+\psi(\bm x)\label{eq:problem_intro}
\end{gather}
where $\mathcal P(t)$ is an unknown probability distribution on $\R^{d_y}$ that depends on the time $t$, a function $\ell\colon\R^d\times\R^{d_y}\to\R$ is a loss function, and $\psi\colon\R^d\to\R\cup\{\infty\}$ is a regularization function.
Prediction-correction algorithms have also been proposed for this stochastic problem \citep{maity2022predictor,simonetto2024nonlinear}.
However, the stochastic prediction-correction-based method proposed by \citet{maity2022predictor} requires an estimation of $(\nabla_{\bm x\bm x}f(\bm x;t))^{-1}\nabla_{\bm xt}f(\bm x;t)$, which is often difficult to obtain in practice.
The method proposed by \citet{simonetto2024nonlinear} lacks theoretical guarantees that incorporating the prediction mechanism yields performance improvements, although empirical results show improved performance.

In order to resolve the issues in these previous studies, we generalize the SHARP algorithm~\citep{kamijima2025simple} to the stochastic setting.
SHARP is a prediction-correction algorithm that computes the prediction by using the Lagrange interpolation and thus does not use derivative information in the prediction.
However, randomness makes the Lagrange interpolation unstable because the previous solutions contain noise.
To address this problem, we utilize the regression instead of the Lagrange interpolation to compute the prediction.
The regression finds a model that minimizes the squared error between the previous solutions and the points computed by the model, while the Lagrange interpolation fits the previous solutions exactly.
Thus, the regression is expected to be more robust to noise than the Lagrange interpolation.

Our contributions are summarized as follows:
\begin{itemize}
    \item We propose a stochastic SHARP algorithm, which does not need estimations of time derivatives and Hessian matrices.
    \item The algorithm tracks a target trajectory with high accuracy under suitable assumptions. The method achieves the $O(h^{q/(2q+1)})$ tracking error bound with $O(1)$ gradient calls per round, where $h$ is a sampling period and $q$ is an integer such that the solution trajectory is $q$-times differentiable and the derivative is bounded.
    See \cref{tab:error_bound} for a comparison with existing methods.
    \item We conduct numerical experiments to validate the effectiveness of the proposed algorithm.
\end{itemize}


\subsection{Related Work}

\subsubsection*{Non-stochastic Time-Varying Optimization}
\citet{popkov05} solved Problem~\cref{eq:problem_deterministic} by considering the problem as a sequence of stationary problems by discretizing the time variable $t$ and iteratively applying the gradient descent.
The tracking error of the method is theoretically guaranteed to be bounded.
\citet{simonetto16} proposed a prediction-correction framework, which allows us to give a smaller tracking error bound.
With this framework as the foundation, several generalizations have been introduced.
\citet{simonetto17} tackled a constrained problem by using a projection operator, and \citet{bastianello19} generalized it by applying the splitting methods.
Besides, extrapolation-based prediction-correction algorithms have been proposed \citep{lin2019simplified,bastianello23,kamijima2025simple}.

\subsubsection*{Stochastic Time-Varying Optimization}
\citet{dixit2019online} proposed to apply the proximal gradient descent to Problem~\cref{eq:problem_intro} and proved that the tracking error is bounded.
\citet{cutler2021stochastic} dealt with the same algorithm and gave a better tracking error bound.
Following these studies, prediction-correction algorithms have been proposed to improve the bound just as in the non-stochastic setting.
\citet{maity2022predictor} added a prediction step similar to that of \citet{simonetto16} in the setting where $\psi=0$.
However, the method tracks the solution accurately only when an estimation of $(\nabla_{\bm x\bm x}f(\bm x;t))^{-1}\nabla_{\bm xt}f(\bm x;t)$ with small error is available, which is restrictive because it is difficult to estimate time derivatives in general stochastic settings.
\citet{simonetto2024nonlinear} proposed a different prediction strategy inspired by the extended Kalman filter.
Although the method performs well in the numerical experiments, the outperformance is not theoretically guaranteed.





\subsubsection*{Online Convex Optimization and Dynamic Regret Analysis}
Online convex optimization also deals with time-varying optimization problems.
The main difference from the time-varying optimization problem we are considering is that online optimization does not assume that the objective function changes continuously over time.
\citet{zinkevich2003online} proposed an online gradient descent algorithm and derived regret bounds.
Since then, numerous studies have been conducted to improve the regret bounds and to develop new algorithms.
A problem closely related to our work is regularized loss minimization in the online setting \citep{xiao2009dual,duchi2009efficient,duchi2010composite,suzuki2013dual}.
Another relevant line of research is dynamic regret analysis, which focuses on the time-varying comparator \citep{zinkevich2003online,hall2013dynamical,besbes2015non,zhang2018adaptive,nonhoff2020online,zhao2024adaptivity}.
The dynamic regret is $\Omega(T)$ in the worst case \citep[Proposition 1]{besbes2015non}, but it is possible to achieve sublinear dynamic regret bounds under some conditions, for example, the path length growth is sufficiently slow \citep{zinkevich2003online,zhang2018adaptive,zhao2024adaptivity}.

\subsection{Notation}
For a $q$-times differentiable function $\bm \varphi\colon\R\to\R^d$, we denote its $i$-th derivative by $\bm \varphi^{(i)}$ for $i=1,\ldots,q$.
The norm $\|\cdot\|_p$ denotes the $\ell_p$-norm and we use $\|\cdot\|$ to denote the Euclidean norm $\|\cdot\|_2$.
The all-one vector is denoted by $\bm{1}\coloneqq(1,\ldots,1)^\top$.
The norm $\|\cdot\|_{\mathrm F}$ denotes the Frobenius norm.
For a convex function $\psi\colon\R^d\to\R$, the subdifferential at $\bm x$ is defined by $\partial\psi(\bm x)\coloneqq\Set{\bm g\in\R^d}{\forall \bm y\in\R^d,~\psi(\bm y)\geq\psi(\bm x)+\inner{\bm g}{\bm y-\bm x}}$.
For a closed convex set $\mathcal X\subset\R^d$, the projection operator is defined by $\proj_{\mathcal X}(\bm x)\coloneqq\argmin_{\bm y\in\mathcal X}\|\bm y-\bm x\|$.
For a proper, closed, and convex function $\psi\colon\R^d\to\R$, the proximal operator is defined by $\prox_\psi(\bm x)\coloneqq\argmin_{\bm y\in\R^d}\{\psi(\bm y)+\frac12\|\bm y-\bm x\|^2\}$.
For two distributions $\mathcal P$ and $\mathcal Q$ on $\R^d$, the Wasserstein-1 distance is defined by
\begin{gather}
    W_1(\mathcal P,\mathcal Q)\coloneqq\inf_{\pi\in\Pi(\mathcal P,\mathcal Q)}\int_{\R^d\times\R^d}\|\bm x-\bm y\|d\pi(\bm x,\bm y),
\end{gather}
where $\Pi(\mathcal P,\mathcal Q)$ is the set of all couplings between $\mathcal P$ and $\mathcal Q$.

\begin{table}
    \centering
    \caption{Comparison of stochastic time-varying optimization algorithms. The columns Gradient Call and Hessian Call represent the number of gradient and Hessian evaluations per round, respectively. The column $\psi$ indicates whether the method can handle a non-smooth regularization term $\psi$. The column Error shows the bound on the tracking error defined as $\limsup_{k\to\infty}\E[\|\bm x_k-\bm x^\ast(t_k)\|]$. The parameter $h$ is a sampling period and $q$ is an integer such that the solution trajectory is $q$-times differentiable and the derivative is bounded. Note that $h^r$ decreases when $r$ increases since $h$ is small.}
    \begin{tabular}{@{\hskip 2pt}c@{\hskip 2pt}|@{\hskip 5pt}c@{\hskip 5pt}c@{\hskip 5pt}c@{\hskip 5pt}c@{\hskip 2pt}}
        \toprule
        Algorithm & Gradient Call & Hessian Call & $\psi$ & Error \\
        \midrule
        TVSGD~\citep{cutler2021stochastic} & $1$ & - & \checkmark & $O(h^{1/3})$ \\
        Stochastic PC~\citep{maity2022predictor} & $O(h^{-4/5})$ & $1$ & - & $O(h^{6/5})$\footnotemark \\
        TV-EKF~\citep{simonetto2024nonlinear} & $O(h^{-4/5})$ & $O(1)$ & - & - \\
        TV-Contract~\citep{simonetto2024nonlinear} & $O(h^{-4/5})$ & - & \checkmark & $O(1)$ \\
        \textbf{Stochastic SHARP (Ours)} & $O(1)$ & - & \checkmark & $O(h^{q/(2q+1)})$ \\
        \bottomrule
    \end{tabular}
    \label{tab:error_bound}
\end{table}
\footnotetext{This bound is valid only for least square problems. See \citet[Section 4.1]{maity2022predictor} for the details.}

\section{Problem Setting and Assumptions}
We consider the following stochastic time-varying optimization problem:
\begin{gather}
    \min_{\bm x\in\R^d}f(\bm x;t)\coloneqq\E_{\bm y\sim \mathcal P(t)}[\ell(\bm x;\bm y)]+\psi(\bm x),\label{eq:problem}
\end{gather}
where $\mathcal P(t)$ is an unknown probability distribution on $\R^{d_y}$ that depends on a continuous-time variable $t\geq0$, a function $\ell\colon\R^d\times\R^{d_y}\to\R$ is a loss function which is strongly convex and smooth in $\bm x$, and $\psi\colon\R^d\to\R\cup\{\infty\}$ is a proper, closed, and convex regularization function. 
Our goal is to track a target trajectory $\bm x^\ast(t)\coloneqq\argmin_{\bm x\in\R^d}f(\bm x;t)$ with high accuracy.
Note that, by the strong convexity of $f(\cdot;t)$, the optimal solution $\bm x^\ast(t)$ is unique.
To achieve the goal, we sample $\bm y_k\sim \mathcal P(t_k)$ at each time $t_k\coloneqq kh$ for $k=0,1,\ldots$, where $h>0$ is a fixed sampling period, and iteratively solve the following optimization problem:
\begin{gather}
    \min_{\bm x\in\R^d}\hat f_k(\bm x)\coloneqq\hat \ell_k(\bm x)+\psi(\bm x),\quad\text{where}\quad\hat\ell_k(\bm x)\coloneqq\ell(\bm x;\bm y_k),\label{eq:problem_hat}
\end{gather}
which is an empirical approximation of \eqref{eq:problem} at time $t_k$.
If multiple samples are available, we can use the average of the loss function as $\hat\ell_k(\bm x)$ instead.
The solution of \eqref{eq:problem_hat} is denoted by $\hat {\bm x}_k^\ast\coloneqq\argmin_{\bm x\in\R^d}\hat f_k(\bm x)$, which is unique due to the strong convexity of $\hat f_k$.

We focus on the case where the problem \eqref{eq:problem} satisfies the following standard assumptions:

\begin{assumption}\label{ass:problem}
    Let $\sigma_1\geq0$, $\varepsilon>0$, $\mu>0$, and $L_{2,0}>0$ be constants.
    \begin{enuminasm}
        \item $\E_{\bm y\sim \mathcal P(t)}[\ell(\bm x;\bm y)]$ and $\E_{\bm y\sim \mathcal P(t)}[\|\nabla_{\bm x}\ell(\bm x;\bm y)\|]$ exist for all $\bm x\in\R^d$ and $t\geq0$.\label{ass:existence}
        \item $\E_{\bm y\sim \mathcal P(t_k)}\bracket*{\mynorm*{\nabla \ell(\bm x_k^\ast;\bm y)-\E_{\bm z\sim \mathcal P(t_k)}[\nabla \ell(\bm x_k^\ast;\bm z)]}^2}\leq\sigma_1^2$ for all $k\geq0$.\label{ass:gradient_variance}
        \item\label{ass:strong_convexity} $\ell(\cdot,\bm y)$ is $\mu$-strongly convex for any $\bm y\in\R^{d_y}$, i.e., for all $\bm x,\bm x'\in\R^d$ and $\bm y\in\R^{d_y}$,
        \begin{align}
            \ell(\bm x;\bm y)&\geq\ell(\bm x';\bm y)+\langle\nabla_{\bm x}\ell(\bm x';\bm y),\bm x-\bm x'\rangle+\frac{\mu}{2}\|\bm x-\bm x'\|^2.\label{eq:strong_convexity}
        \end{align}
        \item\label{ass:smoothness} $\ell(\cdot,\bm y)$ is $L_{2,0}$-smooth in $\bm x$ for any $\bm y\in\R^{d_y}$, i.e., for all $\bm x,\bm x'\in\R^d$ and $\bm y\in\R^{d_y}$,
        \begin{align}
            \ell(\bm x;\bm y)&\leq\ell(\bm x';\bm y)+\langle\nabla_{\bm x}\ell(\bm x';\bm y),\bm x-\bm x'\rangle+\frac{L_{2,0}}{2}\|\bm x-\bm x'\|^2.\label{eq:smoothness}
        \end{align}
        \item $\psi$ is a proper, closed, and convex.\label{ass:ccp}
    \end{enuminasm}
\end{assumption}

\begin{example}
    [Time-varying LASSO]
    Let $\bm y\in\R^{d_y}$ follow the time-varying distribution $\mathcal P(t)$, and it can be observed through sampling.
    Given a fixed matrix $A\in\R^{d_y\times d}$ with $d_y\geq d$ which is of full column rank, we aim to find $\bm x$ such that $\bm y$ and $A\bm x$ become close while preserving the sparsity of $\bm x$.
    The parameter $\bm x$ is computed via LASSO:
    \begin{align}
        \min_{\bm x\in\R^d}\E_{\bm y\sim\mathcal P(t)}\bracket*{\frac12\|A\bm x-\bm y\|^2}+\lambda\|\bm x\|_1,
    \end{align}
    where $\lambda\geq0$ is a regularization parameter.
    This problem corresponds to the case with $\ell(\bm x;\bm y)=\frac12\|A\bm x-\bm y\|^2$ and $\psi(\bm x)=\lambda\|\bm x\|_1$.
    The function $\ell$ is $\sigma_{\mathrm{min}}(A)^2$-strongly convex and $\sigma_{\mathrm{max}}(A)^2$-smooth, where $\sigma_{\mathrm{min}}(A)$ and $\sigma_{\mathrm{max}}(A)$ are the minimum and maximum singular values of $A$, respectively.
\end{example}



\section{Proposed Method}\label{sec:proposed}
This section presents Stochastic SHARP, which is an extension of SHARP \citep{kamijima2025simple} to the stochastic time-varying optimization problem~\cref{eq:problem}.
The algorithm is given in \cref{alg:proposed}, which consists of two steps: a prediction step (\cref{line:prediction}) and a correction step (\cref{line:correction}).
In the prediction step, we predict the target point $\bm x^\ast(t_k)$ by $\hat{\bm x}_k$ based on the previous solutions $\bm x_{k-1},\bm x_{k-2},\ldots$.
In the correction step, we find an approximate solution $\bm x_k$ of the optimization problem \eqref{eq:problem_hat} using the predicted point $\hat{\bm x}_k$ as an initial point.


The prediction step is based on a regression (see, e.g., \citet[Section 2]{shawe2004kernel}).
Let $\bm \phi\colon\R\to\R^p$ be a vector-valued function whose components are basis functions, and we approximate the target point $\bm x^\ast(t_k)$ by a linear combination of the basis functions:
\begin{gather}
    \bm x^\ast(t_k)\approx\hat{\bm x}_k\coloneqq\Theta_k\bm\phi(0),\label{eq:regression}
\end{gather}
where $\Theta_k\in\R^{d\times p}$ is a parameter matrix.
For example, we can choose $\bm\phi(t)=(1,t,t^2,\ldots,t^{p-1})^\top$ as a polynomial basis.
This choice approximates the optimal solution $\bm x^\ast(t)$ by a polynomial function of time.

We assume that $\Phi\coloneqq\mqty(\bm\phi(-1)&\cdots&\bm\phi(-n))\in\R^{p\times n}$ has full row rank and estimate $\Theta_k$ by the following least-squares problem:
\begin{gather}
    \Theta_k\coloneqq\argmin_{\Theta\in\R^{d\times p}}\|X_k-\Theta\Phi\|_{\mathrm F}
    =X_k\Phi^\top(\Phi\Phi^\top)^{-1},\label{eq:least_squares}
\end{gather}
where $n\geq p$ is the number of points used in the regression and $X_k\coloneqq\mqty(\bm x_{k-1} & \cdots & \bm x_{k-n})\in\R^{d\times n}$.
Plugging \eqref{eq:least_squares} into \eqref{eq:regression} gives the prediction
\begin{gather}
    \hat{\bm x}_k=X_k\bm\alpha,\quad\text{where}\quad\bm\alpha\coloneqq\Phi^\top(\Phi\Phi^\top)^{-1}\bm\phi(0)\in\R^n.\label{eq:predicted_point}
\end{gather}
For the calculation of $\bm\alpha$ using the polynomial basis, see \cref{sec:tracking_error_polynomial}.

The following directly follows from \cref{eq:predicted_point}:
\begin{align}
    \Phi\bm\alpha=\bm\phi(0).\label{eq:sum_of_alpha}
\end{align}
Especially, $\inner{\bm1}{\bm\alpha}=1$ holds if $\phi_1(t)=1$, where $\phi_1(t)$ is the first element of $\bm\phi(t)$. 
This is desirable because it ensures that the predicted point $\hat{\bm x}_k$ is consistent with scaling and translation, i.e., substituting $\bm x_{k-i}$ with $c\bm x_{k-i}+\bm v$ for all $i\in\{1,\ldots,n\}$, where $c\in\R$ and $\bm v\in\R^d$, results in $\hat{\bm x}_k$ being changed to $c\hat{\bm x}_k+\bm v$.

The correction step approximately solves the optimization problem \eqref{eq:problem_hat} to correct the predicted point $\hat{\bm x}_k$ to $\bm x_k$.
For example, we can use the proximal gradient method:
\begin{gather}
    \bm x_k^0=\hat{\bm x}_k,\quad \bm x_k^{c+1}=\prox_{\beta \psi}(\bm x_k^c-\beta\nabla\hat \ell_k(\bm x_k^c)),\quad c=0,1,\ldots,c_{\max}-1,\label{eq:proximal_gradient_method}
\end{gather}
where $\beta>0$ is a step size and $c_{\max}\in\N$ is a predetermined iteration number.

\begin{algorithm}[tb]
    \caption{Stochastic SHARP}
    \label{alg:proposed}
    \begin{algorithmic}[1]
        \Require $\bm x_0\in\R^d$, $n\in\N$, $\bm \phi\colon\R\to\R^p$
        \State $\bm x_{-n+1}=\cdots=\bm x_{-1}=\bm x_0$
        \State $\bm\alpha=\Phi^\top(\Phi\Phi^\top)^{-1}\bm\phi(0)$, where $\Phi\coloneqq\mqty(\bm\phi(-1)&\cdots&\bm\phi(-n))$
        \For{$k=1,2,\ldots$}
            \State $\hat{\bm x}_k=X_k\bm\alpha$
            \Comment {Prediction}\label{line:prediction}
            \State Set $\bm x_k$ to be an approximate solution of \cref{eq:problem_hat} by e.g., \cref{eq:proximal_gradient_method}
            \Comment {Correction}\label{line:correction}
        \EndFor
    \end{algorithmic}
\end{algorithm} 
\section{Tracking Error Analysis}\label{sec:tracking_error}
This section analyzes the tracking error of the proposed method.
Recall $\bm x^\ast(t)\coloneqq\argmin_{\bm x\in\R^d}f(\bm x;t)$ is not a random variable, $\hat{\bm x}_k^\ast\coloneqq \argmin_{\bm x\in\R^d}\hat f_k(\bm x)$ is a random variable which depends only on a random sample $\bm y_k\sim\mathcal P(t_k)$, $\hat{\bm x}_k$ is the predicted point, which is a random variable, and $\bm x_k$ is the corrected point, which is also a random variable.
Let $\bm x_k^\ast\coloneqq \bm x^\ast(t_k)$ and $X_k^\ast\coloneqq\mqty(\bm x_{k-1}^\ast&\cdots&\bm x_{k-n}^\ast)\in\R^{d\times n}$ for simplicity.
Our goal is to give an upper bound on the asymptotic tracking error defined by
\begin{gather}
    \limsup_{k\to\infty}e_k,\quad\text{where}\quad e_k\coloneqq\E[\|\hat{\bm x}_k-\bm x_k^\ast\|].
\end{gather}

We assume the following conditions:

\begin{assumption}\label{ass:additional}
    Let $\gamma\in[0,\|\bm\alpha\|_1^{-1})$ and $\Delta_i\geq0$ ($i=1,2,3$) be constants.
    \begin{enuminasm}
        \item $\|\bm x_k-\hat{\bm x}_k^\ast\|\leq \gamma\|\hat{\bm x}_k-\hat{\bm x}_k^\ast\|$ for all $k\geq 0$.\label{ass:correction}
        \item $\|\E[\hat{\bm x}_k^\ast]-\bm x_k^\ast\|\leq\Delta_1$ and $\E[\|\hat{\bm x}_k^\ast-\bm x_k^\ast\|^2]\leq\Delta_2^2$ for all $k\geq0$.\label{ass:empirical}
        \item $\mynorm{X_k^\ast\bm\alpha-\bm x_k^\ast}\leq\Delta_3$ for all $k\geq0$.\label{ass:prediction}
    \end{enuminasm}
\end{assumption}

\cref{ass:correction} states that the correction step converges to $\hat{\bm x}_k^\ast$ linearly, which holds if the proximal gradient descent method~\cref{eq:proximal_gradient_method} is used \citep[Theorem 10.29]{beck2017first}.

Next, we explain \cref{ass:empirical}.
For $\Delta_1$, if $\hat{\bm x}_k^\ast$ is unbiased, $\Delta_1=0$ holds. 
We can take $\Delta_2=\sigma_1/\mu$ as stated in the next proposition, where $\sigma_1$ and $\mu$ are constants in \cref{ass:problem}.
Furthermore, if we use $b\in\N$ samples to approximate $\hat\ell_k(\bm x)$, the bounds in \cref{ass:empirical} scale as $\Delta_1=O(b^{-1})$ and $\Delta_2=O(b^{-1/2})$ under suitable assumptions.
See \cref{sec:empirical} for the details.

\begin{proposition}\label{prop:empirical0}
    Suppose \cref{ass:problem} holds.
    Then, the following holds for all $k\geq0$:
    \begin{gather}
        \E\bracket*{\|\hat{\bm x}_k^\ast-\bm x_k^\ast\|^2}\leq\frac{\sigma_1^2}{\mu^2}.\label{eq:empirical0}
    \end{gather}
\end{proposition}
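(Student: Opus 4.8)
The plan is to bound the random quantity $\|\hat{\bm x}_k^\ast-\bm x_k^\ast\|$ pointwise in the sample $\bm y_k$ by the deviation of the gradient at $\bm x_k^\ast$, and only then take the expectation so that the right-hand side becomes exactly the variance controlled by \cref{ass:gradient_variance}. Write $g(\bm x)\coloneqq\E_{\bm y\sim\mathcal P(t_k)}[\ell(\bm x;\bm y)]$, so that $\bm x_k^\ast$ minimizes $g+\psi$ while $\hat{\bm x}_k^\ast$ minimizes $\hat\ell_k+\psi$. As a preliminary step I would record the interchange of gradient and expectation, which under \cref{ass:existence} together with smoothness gives $\nabla g(\bm x_k^\ast)=\E_{\bm z\sim\mathcal P(t_k)}[\nabla\ell(\bm x_k^\ast;\bm z)]$, while $\nabla\hat\ell_k(\bm x_k^\ast)=\nabla\ell(\bm x_k^\ast;\bm y_k)$.

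First I would write the first-order optimality conditions $-\nabla g(\bm x_k^\ast)\in\partial\psi(\bm x_k^\ast)$ and $-\nabla\hat\ell_k(\hat{\bm x}_k^\ast)\in\partial\psi(\hat{\bm x}_k^\ast)$. Monotonicity of the subdifferential of the convex function $\psi$ (\cref{ass:ccp}) then yields $\langle\nabla\hat\ell_k(\hat{\bm x}_k^\ast)-\nabla g(\bm x_k^\ast),\,\bm x_k^\ast-\hat{\bm x}_k^\ast\rangle\geq0$. Inserting $\nabla\hat\ell_k(\bm x_k^\ast)$ splits the left-hand inner product into a strong-convexity term and a gradient-residual term.

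Next I would apply the $\mu$-strong convexity of $\hat\ell_k=\ell(\cdot;\bm y_k)$ from \cref{ass:strong_convexity}, which gives $\langle\nabla\hat\ell_k(\hat{\bm x}_k^\ast)-\nabla\hat\ell_k(\bm x_k^\ast),\,\hat{\bm x}_k^\ast-\bm x_k^\ast\rangle\geq\mu\|\hat{\bm x}_k^\ast-\bm x_k^\ast\|^2$. Substituting this into the split inequality and bounding the residual term by Cauchy--Schwarz produces $\mu\|\hat{\bm x}_k^\ast-\bm x_k^\ast\|^2\leq\|\nabla\ell(\bm x_k^\ast;\bm y_k)-\E_{\bm z}[\nabla\ell(\bm x_k^\ast;\bm z)]\|\cdot\|\hat{\bm x}_k^\ast-\bm x_k^\ast\|$, hence the pointwise bound $\mu\|\hat{\bm x}_k^\ast-\bm x_k^\ast\|\leq\|\nabla\ell(\bm x_k^\ast;\bm y_k)-\E_{\bm z}[\nabla\ell(\bm x_k^\ast;\bm z)]\|$ (the case $\hat{\bm x}_k^\ast=\bm x_k^\ast$ being trivial). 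Squaring and taking the expectation over $\bm y_k\sim\mathcal P(t_k)$, the right-hand side is precisely the gradient variance bounded by $\sigma_1^2$ in \cref{ass:gradient_variance}, so $\mu^2\,\E[\|\hat{\bm x}_k^\ast-\bm x_k^\ast\|^2]\leq\sigma_1^2$, which is the claim.

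The only delicate point, and the one I expect to be the main technical obstacle, is the gradient--expectation interchange $\nabla\E_{\bm y}[\ell(\bm x;\bm y)]=\E_{\bm y}[\nabla\ell(\bm x;\bm y)]$: it has to be justified from \cref{ass:existence} (integrability of $\|\nabla_{\bm x}\ell\|$) together with the uniform Lipschitz bound from smoothness, e.g. by dominated convergence applied to the difference quotients. Everything after that interchange is a standard strong-convexity-plus-monotonicity comparison of two minimizers sharing the same nonsmooth term $\psi$, so I would state the interchange explicitly and keep the remaining manipulations routine.
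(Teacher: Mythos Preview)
Your argument is correct and coincides with the paper's proof: the paper invokes \cref{prop:empirical1} with $b=1$, whose proof uses exactly the monotonicity-of-$\partial\psi$ inequality you state (packaged there as \cref{lem:optimality_property}), the $\mu$-strong convexity of $\hat\ell_k$, Cauchy--Schwarz, and the gradient--expectation interchange (\cref{lem:gradient_expectation}, justified by dominated convergence just as you anticipate). There is no meaningful difference in strategy or in the technical details.
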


\begin{proof}
    Taking $b=1$ in \cref{prop:empirical1} completes the proof.
\end{proof}

\cref{ass:prediction} holds with $\Delta_3=O(h^q)$ if the polynomial basis is used and $\bm x^\ast$ has a bounded $q$-times derivative, as stated in \cref{lem:smooth_solution}.
In general, $\Delta_3=O(h)$ as long as $\mathcal P$ is $\varepsilon$-sensitive, i.e., for all $t,t'\geq0$,
\begin{gather}
    W_1(\mathcal P(t),\mathcal P(t'))\leq\varepsilon|t-t'|.\label{eq:epsilon_sensitivity}
\end{gather}
where $W_1$ is the Wasserstein-1 distance.

\begin{proposition}
    Suppose \cref{ass:existence,ass:smoothness,ass:strong_convexity} hold, $\nabla_{\bm x}\ell(\bm x;\cdot)$ is $L_{1,1}$-Lipschitz continuous, $\inner{\bm1}{\bm\alpha}=1$ holds, and $\mathcal P$ is $\varepsilon$-sensitive.
    Then, for all $k\geq0$, we have
    \begin{align}
        \mynorm*{X_k^\ast\bm\alpha-\bm x_k^\ast}\leq\frac{L_{1,1}}{\mu}\varepsilon h\sum_{j=1}^n\abs{\sum_{i=j}^n\alpha_i}.\label{eq:delta3}
    \end{align}
\end{proposition}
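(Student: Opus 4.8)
The plan is to combine a Lipschitz bound on the solution trajectory $\bm x^\ast(\cdot)$ with a summation-by-parts rearrangement that turns the coefficients $\alpha_i$ into the partial sums $\sum_{i=j}^n\alpha_i$ appearing on the right-hand side. Throughout, write $F(\bm x;t)\coloneqq\E_{\bm y\sim\mathcal P(t)}[\ell(\bm x;\bm y)]$, so that $\bm x^\ast(t)=\argmin_{\bm x}\{F(\bm x;t)+\psi(\bm x)\}$.

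First I would establish that $\bm x^\ast$ is Lipschitz in $t$ with constant $L_{1,1}\varepsilon/\mu$. Fix two times $t,t'$. The optimality conditions give subgradients $\bm g_t\in\partial\psi(\bm x^\ast(t))$ and $\bm g_{t'}\in\partial\psi(\bm x^\ast(t'))$ with $\nabla_{\bm x}F(\bm x^\ast(t);t)=-\bm g_t$ and $\nabla_{\bm x}F(\bm x^\ast(t');t')=-\bm g_{t'}$. Using the $\mu$-strong convexity of $F(\cdot;t)$, which is inherited from \cref{ass:strong_convexity}, together with the monotonicity of $\partial\psi$ to discard the subgradient terms, I would obtain
\[
    \mu\|\bm x^\ast(t)-\bm x^\ast(t')\|^2\leq\inner{\nabla_{\bm x}F(\bm x^\ast(t');t')-\nabla_{\bm x}F(\bm x^\ast(t');t)}{\bm x^\ast(t)-\bm x^\ast(t')},
\]
and hence, by Cauchy--Schwarz, $\|\bm x^\ast(t)-\bm x^\ast(t')\|\leq\mu^{-1}\|\nabla_{\bm x}F(\bm x^\ast(t');t')-\nabla_{\bm x}F(\bm x^\ast(t');t)\|$.

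Next I would convert the gradient difference into a Wasserstein distance. Since $\nabla_{\bm x}F(\bm x;t)=\E_{\bm y\sim\mathcal P(t)}[\nabla_{\bm x}\ell(\bm x;\bm y)]$ by differentiation under the expectation (justified by \cref{ass:existence,ass:smoothness}), I would test against a unit vector $\bm u$: the scalar map $\bm y\mapsto\inner{\bm u}{\nabla_{\bm x}\ell(\bm x;\bm y)}$ is $L_{1,1}$-Lipschitz, so the Kantorovich--Rubinstein duality gives $\inner{\bm u}{\nabla_{\bm x}F(\bm x;t')-\nabla_{\bm x}F(\bm x;t)}\leq L_{1,1}W_1(\mathcal P(t'),\mathcal P(t))$. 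Taking the supremum over $\|\bm u\|\leq1$ and using $\varepsilon$-sensitivity yields $\|\nabla_{\bm x}F(\bm x;t')-\nabla_{\bm x}F(\bm x;t)\|\leq L_{1,1}\varepsilon|t-t'|$, so that $\|\bm x^\ast(t)-\bm x^\ast(t')\|\leq(L_{1,1}\varepsilon/\mu)|t-t'|$.

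Finally I would assemble the bound. Because $\inner{\bm1}{\bm\alpha}=1$, we have $\bm x_k^\ast=\left(\sum_{i=1}^n\alpha_i\right)\bm x_k^\ast$, so that $X_k^\ast\bm\alpha-\bm x_k^\ast=\sum_{i=1}^n\alpha_i(\bm x^\ast(t_{k-i})-\bm x^\ast(t_k))$. Writing each term as a telescoping sum of one-step increments $\bm d_j\coloneqq\bm x^\ast(t_{k-j})-\bm x^\ast(t_{k-j+1})$, namely $\bm x^\ast(t_{k-i})-\bm x^\ast(t_k)=\sum_{j=1}^i\bm d_j$, and exchanging the order of summation over $1\leq j\leq i\leq n$, I would obtain
\[
    X_k^\ast\bm\alpha-\bm x_k^\ast=\sum_{j=1}^n\left(\sum_{i=j}^n\alpha_i\right)\bm d_j.
\]
The triangle inequality together with the one-step bound $\|\bm d_j\|\leq(L_{1,1}\varepsilon/\mu)h$ (consecutive sampling times differ by $h$) then gives the claimed inequality. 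The step I expect to be the main obstacle is the solution-perturbation bound with the nonsmooth regularizer present: arranging the strong-convexity and monotonicity inequalities so the subgradients cancel, and then passing from the resulting gradient difference to $W_1$ via a vector-valued Kantorovich--Rubinstein argument. By contrast, the reindexing is what sharpens the naive bound $\sum_{i=1}^n i|\alpha_i|$ into the partial-sum form $\sum_{j=1}^n|\sum_{i=j}^n\alpha_i|$, and it is routine once the Lipschitz constant is available.
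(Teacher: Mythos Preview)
Your proposal is correct and follows essentially the same route as the paper: the paper also uses $\inner{\bm1}{\bm\alpha}=1$ to telescope into one-step differences, swaps the summation order to produce the partial sums $\sum_{i=j}^n\alpha_i$, and then invokes a drift bound $\|\bm x_{k-j}^\ast-\bm x_{k-j+1}^\ast\|\leq(L_{1,1}/\mu)W_1(\mathcal P(t_{k-j}),\mathcal P(t_{k-j+1}))$ proved via strong convexity, the optimality conditions with $\psi$, and Kantorovich--Rubinstein duality. The only cosmetic difference is that the paper applies duality directly to the inner product $\inner{\nabla_{\bm x}\ell(\bm x;\bm y)}{\bm x_k^\ast-\bm x_{k'}^\ast}$ rather than first testing against arbitrary unit vectors, but the two are equivalent.
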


\begin{proof}
    Since $\inner{\bm1}{\bm\alpha}=1$, we have
    \begin{align}
        X_k^\ast\bm\alpha-\bm x_k^\ast
        =\sum_{i=1}^n\alpha_i(\bm x_{k-i}^\ast-\bm x_k^\ast)
        &=\sum_{i=1}^n\sum_{j=1}^i\alpha_i(\bm x_{k-j}^\ast-\bm x_{k-j+1}^\ast)\\
        &=\sum_{j=1}^n\sum_{i=j}^n\alpha_i(\bm x_{k-j}^\ast-\bm x_{k-j+1}^\ast).
    \end{align}
    Taking the norm and using the triangle inequality gives
    \begin{align}
        \mynorm*{X_k^\ast\bm\alpha-\bm x_k^\ast}
        &\leq\sum_{j=1}^n\abs{\sum_{i=j}^n\alpha_i}\|\bm x_{k-j}^\ast-\bm x_{k-j+1}^\ast\|.\label{eq:delta3_1}
    \end{align}
    \cref{prop:drift} yields
    \begin{align}
        \|\bm x_{k-j}^\ast-\bm x_{k-j+1}^\ast\|
        &\leq\frac{L_{1,1}}{\mu}W_1(\mathcal P(t_{k-j}),\mathcal P(t_{k-j+1}))\\
        &\leq\frac{L_{1,1}}{\mu}\varepsilon h,\label{eq:delta3_2}
    \end{align}
    where we used \cref{eq:epsilon_sensitivity} in the last inequality.
    Combining \cref{eq:delta3_1,eq:delta3_2} gives \cref{eq:delta3}.
\end{proof}

The bound on the tracking error is given as follows:

\begin{theorem}\label{thm:tracking_error}
    Suppose \cref{ass:problem,ass:additional} hold.
    Then, there exists $C\geq0$ such that 
    \begin{gather}
        e_k\leq\frac{\|\bm\alpha\|_1\Delta_1+(\gamma\|\bm\alpha\|_1+\|\bm\alpha\|)\Delta_2+\Delta_3}{1-\gamma \|\bm\alpha\|_1}+C(\gamma\|\bm\alpha\|_1)^{k/n}
    \end{gather}
    for all $k\geq0$ and therefore,
    \begin{gather}
        \limsup_{k\to\infty}e_k\leq\frac{\|\bm\alpha\|_1\Delta_1+(\gamma\|\bm\alpha\|_1+\|\bm\alpha\|)\Delta_2+\Delta_3}{1-\gamma \|\bm\alpha\|_1}.\label{eq:tracking_error}
    \end{gather}
\end{theorem}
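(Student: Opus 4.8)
The plan is to reduce the bound to a linear delay recursion for $e_k$ and then solve it. Writing the prediction as $\hat{\bm x}_k=X_k\bm\alpha=\sum_{i=1}^n\alpha_i\bm x_{k-i}$ and inserting the empirical and true optima of the past steps, I would decompose the prediction error as
\[
\hat{\bm x}_k-\bm x_k^\ast
=\underbrace{\sum_{i=1}^n\alpha_i(\bm x_{k-i}-\hat{\bm x}_{k-i}^\ast)}_{(\mathrm I)}
+\underbrace{\sum_{i=1}^n\alpha_i(\hat{\bm x}_{k-i}^\ast-\bm x_{k-i}^\ast)}_{(\mathrm{II})}
+\underbrace{X_k^\ast\bm\alpha-\bm x_k^\ast}_{(\mathrm{III})},
\]
and bound $\E[\|\cdot\|]$ of each piece. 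Term $(\mathrm{III})$ is deterministic and bounded by $\Delta_3$ through \cref{ass:prediction}. For $(\mathrm I)$ I would invoke the correction contraction \cref{ass:correction}, $\|\bm x_{k-i}-\hat{\bm x}_{k-i}^\ast\|\le\gamma\|\hat{\bm x}_{k-i}-\hat{\bm x}_{k-i}^\ast\|$, then insert $\bm x_{k-i}^\ast$ and use the triangle inequality with $\E[\|\hat{\bm x}_{k-i}^\ast-\bm x_{k-i}^\ast\|]\le\Delta_2$ (Jensen on \cref{ass:empirical}); this gives $\E[\|\bm x_{k-i}-\hat{\bm x}_{k-i}^\ast\|]\le\gamma(e_{k-i}+\Delta_2)$ and hence $\E[\|(\mathrm I)\|]\le\gamma\sum_{i=1}^n|\alpha_i|e_{k-i}+\gamma\|\bm\alpha\|_1\Delta_2$.

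The crux is $(\mathrm{II})$, and it is here that the two distinct coefficients $\|\bm\alpha\|_1$ and $\|\bm\alpha\|$ arise. Setting $\bm\xi_{k-i}\coloneqq\hat{\bm x}_{k-i}^\ast-\bm x_{k-i}^\ast$, I would separate mean from fluctuation. The mean part is controlled by the bias bound in \cref{ass:empirical}: $\|\sum_{i}\alpha_i\E[\bm\xi_{k-i}]\|\le\|\bm\alpha\|_1\Delta_1$. For the centered part, the decisive observation is that $\bm\xi_{k-i}$ depends only on the fresh sample $\bm y_{k-i}$, so the centered noises $\bm\xi_{k-i}-\E[\bm\xi_{k-i}]$ are zero-mean and independent across $i$; the cross terms then vanish and Jensen gives $\E[\|\sum_i\alpha_i(\bm\xi_{k-i}-\E[\bm\xi_{k-i}])\|]\le(\sum_i\alpha_i^2\,\E\|\bm\xi_{k-i}-\E[\bm\xi_{k-i}]\|^2)^{1/2}\le\|\bm\alpha\|\Delta_2$, where I use $\E\|\bm\xi-\E[\bm\xi]\|^2\le\E\|\bm\xi\|^2\le\Delta_2^2$. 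Altogether $\E[\|(\mathrm{II})\|]\le\|\bm\alpha\|_1\Delta_1+\|\bm\alpha\|\Delta_2$.

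Summing the three bounds produces the recursion $e_k\le\gamma\sum_{i=1}^n|\alpha_i|e_{k-i}+B$ for $k\ge1$, with $B\coloneqq\|\bm\alpha\|_1\Delta_1+(\gamma\|\bm\alpha\|_1+\|\bm\alpha\|)\Delta_2+\Delta_3$; for small $k$ some indices $k-i\le0$ fall in the initialization regime $\bm x_j=\bm x_0$, where $\E\|\bm x_j-\hat{\bm x}_j^\ast\|$ is a bounded constant, so the recursion holds with these bounded base values, which I absorb into the constant $C$. Writing $\rho\coloneqq\gamma\|\bm\alpha\|_1<1$ (guaranteed by $\gamma<\|\bm\alpha\|_1^{-1}$) and $e^\ast\coloneqq B/(1-\rho)$, the shifted errors $\tilde e_k\coloneqq e_k-e^\ast$ satisfy $\tilde e_k\le\gamma\sum_i|\alpha_i|\tilde e_{k-i}\le\rho\max_{1\le i\le n}\tilde e_{k-i}$, since $\gamma\|\bm\alpha\|_1e^\ast+B=e^\ast$. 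To extract the rate I would track the windowed maximum $P_k\coloneqq\max\{0,\max_{k-n+1\le j\le k}\tilde e_j\}$: the inequality yields $\tilde e_k\le\rho P_{k-1}$, so $P_k$ is non-increasing, and telescoping over a full window of length $n$ gives $P_{k+n}\le\rho P_k$ and hence $P_k\le C\rho^{k/n}$. Then $e_k\le e^\ast+\rho P_{k-1}\le e^\ast+C(\gamma\|\bm\alpha\|_1)^{k/n}$, and letting $k\to\infty$ yields \cref{eq:tracking_error}.

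I expect term $(\mathrm{II})$ to be the main obstacle: correctly exploiting the independence of the per-step noise to obtain the $\|\bm\alpha\|$ rather than the coarser $\|\bm\alpha\|_1$ scaling of the variance is what lets the stochastic contribution shrink as the regression weights spread out, and it is the quantitatively important part of the bound. A secondary difficulty is passing from the $n$-step delay recursion to the geometric rate $(\gamma\|\bm\alpha\|_1)^{k/n}$, which requires the windowed-maximum comparison rather than a one-step contraction.
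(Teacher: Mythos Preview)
Your proposal is correct and follows essentially the same approach as the paper: the same three-term decomposition, the same bounds on (I) and (III), and the same use of independence of the $\hat{\bm x}_{k-i}^\ast$ to turn the $\|\bm\alpha\|_1$ scaling into $\|\bm\alpha\|$ on the stochastic part of (II). The only cosmetic differences are that the paper bounds (II) by expanding $\E\|\cdot\|^2$ into diagonal and off-diagonal sums (rather than your mean/fluctuation split) and solves the delay recursion via an induction lemma (\cref{lem:recursion_lem}) rather than your windowed-maximum argument; both routes yield the identical bound and rate.
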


\begin{proof}
    Let $\hat X_k^\ast\coloneqq\mqty(\hat{\bm x}_{k-1}^\ast&\cdots&\hat{\bm x}_{k-n}^\ast)\in\R^{d\times n}$.
    First, the triangle inequality gives
    \begin{align}
        e_k
        =\E[\|\hat{\bm x}_k-\bm x_k^\ast\|]
        &=\E[\norm{X_k\bm\alpha-\bm x_k^\ast}]\\
        &\leq\E\bracket*{\norm{(X_k-\hat X_k^\ast)\bm\alpha}}
        +\E\bracket*{\norm{(\hat X_k^\ast-X_k^\ast)\bm\alpha}}
        +\norm{X_k^\ast\bm\alpha-\bm x_k^\ast}.\label{eq:tracking_error_1}
    \end{align}
    In the following, we give bounds on each term of the right-hand side.

    The first term can be bounded as follows:
    \begin{align}
        \E\bracket*{\norm{(X_k-\hat X_k^\ast)\bm\alpha}}
        &\leq\sum_{i=1}^n|\alpha_i|\E\bracket*{\|\bm x_{k-i}-\hat{\bm x}_{k-i}^\ast\|}\\
        &\leq\gamma\sum_{i=1}^n|\alpha_i|\E\bracket*{\|\hat{\bm x}_{k-i}-\hat{\bm x}_{k-i}^\ast\|}\\
        &\leq\gamma\sum_{i=1}^n|\alpha_i|\E\bracket*{\|\hat{\bm x}_{k-i}-\bm x_{k-i}^\ast\|}+\gamma\sum_{i=1}^n|\alpha_i|\E\bracket*{\|\bm x_{k-i}^\ast-\hat{\bm x}_{k-i}^\ast\|}\\
        &\leq\gamma\sum_{i=1}^n|\alpha_i|e_{k-i}+\gamma\|\bm\alpha\|_1\Delta_2,
    \end{align}
    where the triangle inequality is used in the first and third inequalities, \cref{ass:correction} is used in the second inequality, and Jensen's inequality and \cref{ass:empirical} are used in the last inequality.
    The second term of the right-hand side in \cref{eq:tracking_error_1} can be bounded as follows:
    \begin{align}
        \E\bracket*{\norm{(\hat X_k^\ast-X_k^\ast)\bm\alpha}}^2
        &\leq\E\bracket*{\norm{(\hat X_k^\ast-X_k^\ast)\bm\alpha}^2}
        =\E\bracket*{\mynorm*{\sum_{i=1}^n\alpha_i(\hat{\bm x}_{k-i}^\ast-\bm x_{k-i}^\ast)}^2}\\
        &=\sum_{i=1}^n\alpha_i^2\E\bracket*{\|\hat{\bm x}_{k-i}^\ast-\bm x_{k-i}^\ast\|^2}+\sum_{i\neq j}\alpha_i\alpha_j\E\bracket*{\inner{\hat{\bm x}_{k-i}^\ast-\bm x_{k-i}^\ast}{\hat{\bm x}_{k-j}^\ast-\bm x_{k-j}^\ast}}\\
        &\leq\sum_{i=1}^n\alpha_i^2\E\bracket*{\|\hat{\bm x}_{k-i}^\ast-\bm x_{k-i}^\ast\|^2}+\sum_{i\neq j}|\alpha_i\alpha_j|\|\E[\hat{\bm x}_{k-i}^\ast]-\bm x_{k-i}^\ast\|\|\E[\hat{\bm x}_{k-j}^\ast]-\bm x_{k-j}^\ast\|\\
        &\leq\|\bm\alpha\|^2\Delta_2^2+\|\bm\alpha\|_1^2\Delta_1^2,
    \end{align}
    where the first inequality follows from Jensen's inequality, the second from the Cauchy--Schwarz inequality and the third from \cref{ass:empirical}.
    Hence, we have
    \begin{align}
        \E\bracket*{\norm{(\hat X_k^\ast-X_k^\ast)\bm\alpha}}
        &\leq\|\bm\alpha\|\Delta_2+\|\bm\alpha\|_1\Delta_1.
    \end{align}
    The third term of the right-hand side in \cref{eq:tracking_error_1} is bounded by $\Delta_3$ from \cref{ass:prediction}.

    Combining all these results, we obtain
    \begin{gather}
        e_k\leq\gamma\sum_{i=1}^n|\alpha_i|e_{k-i}+\gamma\|\bm\alpha\|_1\Delta_2+\|\bm\alpha\|\Delta_2+\|\bm\alpha\|_1\Delta_1+\Delta_3.\label{eq:recurrence_relation2}
    \end{gather}
    Since we have $\gamma\|\bm\alpha\|_1<1$ from \cref{ass:additional}, solving \cref{eq:recurrence_relation2} by using \cref{lem:recursion_lem} completes the proof.
\end{proof}


\section{Choices of Coefficients}\label{sec:choices_of_coefficients}
We derived the prediction formula $\hat{\bm x}_k=X_k\bm\alpha$ based on a regression in \cref{sec:proposed}.
However, the tracking error in \cref{thm:tracking_error} is independent of the choice of the basis function $\bm \phi$ (note that its statement and proof do not include $\bm \phi$).
Therefore, choosing $\bm\alpha$ instead of $\bm \phi$ is sufficient.
How to choose the coefficient $\bm\alpha$ is nontrivial, because the best choice depends on the underlying dynamics of the target point $\bm x^\ast(t)$.

\subsection{Tracking Error for Polynomial Regression}\label{sec:tracking_error_polynomial}
One choice is to use the coefficients computed by \eqref{eq:predicted_point} with a predetermined function $\bm \phi\colon\R\to\R^p$.
Here we specialize \cref{thm:tracking_error} to polynomial regression and derive the corresponding tracking-error bound.
We take $\bm\phi(t)=(1,-t,(-t)^2,\ldots,(-t)^{p-1})^\top$.
Then, $\bm\alpha$ can be computed as follows:
\begin{align}
    \alpha_i
    &=(\Phi^\top(\Phi\Phi^\top)^{-1}\bm\phi(0))_i
    =\mqty(1&i&\cdots&i^{p-1})M^{-1}\mqty(1\\0\\\vdots\\0),\label{eq:alpha_matrix}
\end{align}
where $M\in\R^{p\times p}$ is the matrix defined by $M_{l,m}\coloneqq\sum_{j=1}^{n}j^{l+m-2}$ for $l,m=1,2,\ldots,p$.

\begin{example}
    Let $p=2$.
    Then, we have
    \begin{gather}
        \alpha_i=\frac{4n+2-6i}{n(n-1)},\quad i=1,2,\ldots,n,\label{eq:alpha_2}
    \end{gather}
    which are similar to the coefficients used in \citet{maity2022predictor}.
    In this case, we have
    \begin{align}
        \hat{\bm x}_{k}-\hat{\bm x}_{k-1}
        &=\sum_{i=1}^n\alpha_i\bm x_{k-i}-\sum_{i=1}^n\alpha_i\bm x_{k-i-1}\\
        &=\alpha_{1}\bm x_{k-1}+\sum_{i=1}^{n-1}(\alpha_{i+1}-\alpha_i)\bm x_{k-i-1}-\alpha_{n}\bm x_{k-n-1}\\
        &=\frac4n\bm x_{k-1}-\frac{6}{n(n-1)}\bm s_{k-1}+\frac2n\bm x_{k-n-1},
    \end{align}
    where $\bm s_{k}\coloneqq\sum_{i=1}^{n-1}\bm x_{k-i}$.
    This means that storing $\bm s_k$ reduces the computational cost.
    The resulting algorithm is summarized in \cref{alg:proposed_linear}.
    Similarly, for $p\geq3$, we can reduce the computational cost by the same trick.
    Notable point is that the computational cost of the prediction step in \cref{alg:proposed_linear} is independent of $h$, while the existing methods in \cref{tab:error_bound} other than TVSGD require $O(h^{-4/5})$ gradient calls per iteration.
\end{example}

\begin{example}\label{ex:sharp}
    Let $p=n$.
    Then, we have
    \begin{gather}
        \alpha_i=(-1)^{i-1}\binom ni,\quad i=1,2,\ldots,n,
    \end{gather}
    which recovers the coefficients of SHARP~\citep{kamijima2025simple}.
    This is because SHARP is based on the Lagrange interpolation, which corresponds to taking $\Theta_k$ so that the objective function value in \cref{eq:least_squares} becomes 0.
    Hence, SHARP can be interpreted as a special case of the stochastic SHARP.
\end{example}

\begin{algorithm}[tb]
    \caption{Stochastic SHARP (Linear Regression)}
    \label{alg:proposed_linear}
    \begin{algorithmic}[1]
        \Require $\bm x_0\in\R^d$, $n\in\N$
        \State $\bm x_{-n}=\cdots=\bm x_{-1}=\hat{\bm x}_0=\bm x_0$
        \State $\bm s_0=(n-1)\bm x_0$
        \For{$k=1,2,\ldots$}
            \State Update $\hat{\bm x}_k=\hat{\bm x}_{k-1}+\frac4n\bm x_{k-1}-\frac6{n(n-1)}\bm s_{k-1}+\frac2n\bm x_{k-n-1}$
            \Comment {Prediction}\label{line:prediction_linear}
            \State Update $\bm s_k=\bm s_{k-1}+\bm x_{k-1}-\bm x_{k-n}$
            \label{line:update_y}
            \State Set $\bm x_k$ to be an approximate solution of \cref{eq:problem_hat} by e.g., \cref{eq:proximal_gradient_method}
            \Comment {Correction}\label{line:correction_linear}
        \EndFor
    \end{algorithmic}
\end{algorithm}

Next, we compute the leading term of $\|\bm\alpha\|$ and $\|\bm\alpha\|_1$ to see the asymptotic behavior of the tracking error bound in \cref{thm:tracking_error}.

\begin{lemma}
    \label{lem:alpha_norm}
    Let $\bm\alpha\in\R^n$ be the coefficient vector defined in \cref{eq:alpha_matrix}.
    Then, for any fixed $p\geq1$, we have
    \begin{gather}
        \|\bm\alpha\|=O\prn*{\frac1{\sqrt n}}\quad\text{and}\quad\|\bm\alpha\|_1=O\prn*{1}.
    \end{gather}
\end{lemma}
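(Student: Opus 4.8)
The plan is to read each coefficient in \cref{eq:alpha_matrix} as the value at the integer $i$ of a fixed-degree polynomial whose coefficient vector is $M^{-1}\bm e_1$, and then to track how that vector decays in $n$ by rescaling $M$ to a Hilbert matrix. Writing $\bm e_1=(1,0,\ldots,0)^\top\in\R^p$ and $\bm v_i=(1,i,\ldots,i^{p-1})^\top$, \cref{eq:alpha_matrix} reads $\alpha_i=\bm v_i^\top\bm c$ with $\bm c\coloneqq M^{-1}\bm e_1$, so that $\alpha_i=\sum_{l=1}^p c_l\,i^{l-1}$. Everything therefore reduces to controlling the entries $c_l$ of $\bm c$ as functions of $n$.

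To expose the $n$-dependence I would rescale $M$ by $D\coloneqq\mathrm{diag}(n^0,n^1,\ldots,n^{p-1})$. A direct computation gives
\begin{align}
    (D^{-1}MD^{-1})_{l,m}=n^{-(l+m-2)}\sum_{j=1}^n j^{l+m-2}=\sum_{j=1}^n\prn*{\tfrac{j}{n}}^{l+m-2},
\end{align}
so $D^{-1}MD^{-1}=n\tilde M$, where $\tilde M_{l,m}\coloneqq\frac1n\sum_{j=1}^n(j/n)^{l+m-2}$ is a Riemann sum for $\int_0^1 x^{l+m-2}\,dx=\frac1{l+m-1}$. Hence $\tilde M\to H$ entrywise as $n\to\infty$, where $H$ is the $p\times p$ Hilbert matrix $H_{l,m}=1/(l+m-1)$. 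Since $M=nD\tilde MD$ and $D^{-1}\bm e_1=\bm e_1$, I obtain $\bm c=\frac1n D^{-1}\tilde M^{-1}\bm e_1$, that is, $c_l=n^{-l}(\tilde M^{-1}\bm e_1)_l$.

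The crux is then to show that each component of $\tilde M^{-1}\bm e_1$ is $O(1)$ uniformly in $n$. Because the Hilbert matrix $H$ is invertible and $\tilde M\to H$, continuity of matrix inversion in the fixed dimension $p$ yields $\tilde M^{-1}\to H^{-1}$; in particular $\tilde M^{-1}\bm e_1$ is a convergent, hence bounded, sequence, so $|(\tilde M^{-1}\bm e_1)_l|\le C$ for all large $n$ and some constant $C$ depending only on $p$. This gives $|c_l|\le Cn^{-l}$, and therefore, for every $i\in\{1,\ldots,n\}$,
\begin{align}
    |\alpha_i|\le\sum_{l=1}^p|c_l|\,i^{l-1}\le C\sum_{l=1}^p n^{-l}i^{l-1}=\frac{C}{n}\sum_{l=1}^p\prn*{\tfrac{i}{n}}^{l-1}\le\frac{Cp}{n},
\end{align}
using $i\le n$ in the last step. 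With this uniform bound $|\alpha_i|=O(1/n)$ the two claims follow at once: $\|\bm\alpha\|_1=\sum_{i=1}^n|\alpha_i|\le n\cdot O(1/n)=O(1)$, while $\|\bm\alpha\|^2=\sum_{i=1}^n\alpha_i^2\le n\cdot O(1/n^2)=O(1/n)$, so $\|\bm\alpha\|=O(1/\sqrt n)$.

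The main obstacle is precisely the uniform boundedness of $\tilde M^{-1}\bm e_1$, which rests on three finite-dimensional facts: the entrywise Riemann-sum convergence $\tilde M\to H$, the invertibility of the Hilbert matrix $H$, and the continuity of inversion at an invertible limit. None of these is delicate once $p$ is treated as fixed; the constant $C$ is allowed to depend badly on $p$ (Hilbert matrices are notoriously ill-conditioned), but this is harmless since the asymptotics are taken in $n$ with $p$ held constant. The only point to state carefully is that ``$\tilde M\to H$'' together with $H$ invertible is enough to conclude boundedness of $\tilde M^{-1}$ for all sufficiently large $n$, which is exactly where a convergent-sequences-are-bounded argument is invoked.
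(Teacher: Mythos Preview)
Your proof is correct and rests on the same key observation as the paper's: after scaling by powers of $n$, the matrix $M$ converges to the $p\times p$ Hilbert matrix, so the entries of $M^{-1}\bm e_1$ decay like $n^{-l}$. The execution differs slightly. The paper writes the asymptotic expansion of each entry of $M$, invokes a cofactor expansion to obtain $(M^{-1})_{i,j}=(\tilde M^{-1})_{i,j}n^{-i-j+1}+O(n^{-i-j})$, and then computes $n\|\bm\alpha\|^2$ as a double sum that converges to a constant, deducing $\|\bm\alpha\|_1=O(1)$ only afterwards via Cauchy--Schwarz. Your diagonal rescaling $M=nD\tilde M D$ with $\tilde M\to H$ and continuity of inversion is a cleaner substitute for the cofactor step, and it yields the stronger uniform pointwise bound $|\alpha_i|\le Cp/n$, from which both norm estimates drop out immediately without any summation identities. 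The two arguments are equivalent in spirit; yours is a bit more transparent and avoids the somewhat informal cofactor-expansion claim.
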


\begin{proof}
    See \cref{sec:proof_of_lem_alpha_norm}.
\end{proof}

Next, we give a sufficient condition for $\Delta_3$ in \cref{ass:additional} to be small.

\begin{lemma}\label{lem:smooth_solution}
    Let $q\in[1,p]$ be an arbitrarily fixed integer and $\bm\alpha\in\R^n$ be the coefficient vector defined in \cref{eq:alpha_matrix}.
    Suppose that $\bm x^\ast(t)$ is $q$-times differentiable and $\sup_{t\geq0}\|(\bm x^\ast)^{(q)}(t)\|<\infty$.
    Then, \cref{ass:prediction} holds with $\Delta_3=\|\bm\alpha\|_1\binom {n}{q}h^{q}\sup_{t\geq0}\|(\bm x^\ast)^{(q)}(t)\|$. 
\end{lemma}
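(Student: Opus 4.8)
The plan is to bound the left-hand side of \cref{ass:prediction}, namely $\bigl\|X_k^\ast\bm\alpha-\bm x_k^\ast\bigr\|=\bigl\|\sum_{i=1}^n\alpha_i\bm x^\ast(t_k-ih)-\bm x^\ast(t_k)\bigr\|$, by exploiting the moment identities that the regression coefficients satisfy. Reading off the rows of \cref{eq:sum_of_alpha} for the polynomial basis $\bm\phi(t)=(1,-t,\dots,(-t)^{p-1})^\top$ gives $\sum_{i=1}^n\alpha_i i^m=\delta_{m0}$ for $m=0,\dots,p-1$; equivalently $\sum_{i=1}^n\alpha_i P(i)=P(0)$ for every polynomial $P$ of degree at most $p-1$. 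Since $q\le p$, I may apply this to the degree-$m$ polynomials $\binom{i}{m}$ for $m=0,\dots,q-1$, obtaining $\sum_{i=1}^n\alpha_i\binom{i}{m}=\binom{0}{m}=\delta_{m0}$. These are the identities that will make the polynomial part of the expansion cancel exactly.

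The key modeling choice is to expand via Newton forward-difference interpolation rather than a Taylor series at $t_k$. I would write $\bm g(s)\coloneqq\bm x^\ast(t_k-sh)$, so that $\bm g(i)=\bm x_{k-i}^\ast$ and $\bm g(0)=\bm x_k^\ast$, and let $\bm P$ be the vector polynomial of degree $q-1$ interpolating $\bm g$ at the integer nodes $0,1,\dots,q-1$. In Newton form this is $\bm P(i)=\sum_{m=0}^{q-1}\binom{i}{m}\Delta^m\bm g(0)$, where $\Delta\bm g(s)\coloneqq\bm g(s+1)-\bm g(s)$. I would then invoke the interpolation remainder: because the node polynomial is $\prod_{j=0}^{q-1}(i-j)=q!\binom{i}{q}$ and $\bm g^{(q)}(s)=(-h)^q(\bm x^\ast)^{(q)}(t_k-sh)$, the remainder obeys $\|\bm g(i)-\bm P(i)\|\le\binom{i}{q}h^q\sup_{t\ge0}\|(\bm x^\ast)^{(q)}(t)\|$.

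Combining the two ingredients finishes the proof. Substituting $\bm g(i)=\bm P(i)+(\bm g(i)-\bm P(i))$ into $X_k^\ast\bm\alpha-\bm x_k^\ast=\sum_{i=1}^n\alpha_i\bm g(i)-\bm g(0)$ and using the moment identities, the polynomial part telescopes to $\sum_{i=1}^n\alpha_i\bm P(i)=\sum_{m=0}^{q-1}\Delta^m\bm g(0)\sum_{i=1}^n\alpha_i\binom{i}{m}=\Delta^0\bm g(0)=\bm g(0)$, so it cancels $\bm g(0)$ exactly and leaves $X_k^\ast\bm\alpha-\bm x_k^\ast=\sum_{i=1}^n\alpha_i(\bm g(i)-\bm P(i))$. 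The triangle inequality together with the remainder bound and $\binom{i}{q}\le\binom{n}{q}$ then yields $\|X_k^\ast\bm\alpha-\bm x_k^\ast\|\le\sum_{i=1}^n|\alpha_i|\binom{i}{q}h^q\sup_t\|(\bm x^\ast)^{(q)}(t)\|\le\|\bm\alpha\|_1\binom{n}{q}h^q\sup_t\|(\bm x^\ast)^{(q)}(t)\|$, which is exactly the claimed $\Delta_3$.

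I expect the main obstacle to be twofold. First, obtaining the constant $\binom{n}{q}$ rather than the looser $n^q/q!$ hinges precisely on the choice of Newton interpolation at consecutive integer nodes: a direct Taylor expansion of $\bm x^\ast(t_k-ih)$ around $t_k$ would produce a remainder of size $i^qh^q/q!$, and summing against $|\alpha_i|$ would only give $\|\bm\alpha\|_1(n^q/q!)h^q$, which is weaker. Second, some care is needed to make the remainder bound hold in the Euclidean norm: the component-wise mean-value form of the interpolation error uses a different intermediate point in each coordinate, so I would instead use the integral (Peano-kernel) form of the remainder, whose kernel is single-signed for interpolation at consecutive nodes and has $L^1$-norm exactly $\binom{i}{q}$, giving $\|\bm g(i)-\bm P(i)\|\le\binom{i}{q}\sup_s\|\bm g^{(q)}(s)\|$ directly.
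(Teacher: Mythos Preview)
Your proof is correct and takes a genuinely different route from the paper's. Both arguments rest on the same algebraic fact---the moment identities $\sum_{i=1}^n\alpha_i\,i^m=\delta_{m0}$ for $m=0,\dots,p-1$, read off from $\Phi\bm\alpha=\bm\phi(0)$---but they exploit it differently. The paper works with the shift-operator polynomial $P(S)=\sum_i\alpha_iS^{n-i}-S^n$, shows that the moment identities force $z=1$ to be a root of $P$ of multiplicity at least $p\ge q$, factors $P(z)=(1-z)^q\sum_{i=0}^{n-q}b_iz^i$, bounds each $q$-th finite difference by $h^q\sup_t\|(\bm x^\ast)^{(q)}(t)\|$, and then carries out a separate generating-function computation (expanding $(1-z)^{-q}$ and invoking the hockey-stick identity) to establish $\sum_i|b_i|\le\binom{n}{q}\|\bm\alpha\|_1$. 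Your approach is more direct: subtract the degree-$(q{-}1)$ Newton interpolant of $\bm g(s)=\bm x^\ast(t_k-sh)$ at the nodes $0,\dots,q-1$, observe that the moment identities make $\sum_i\alpha_i\bm P(i)=\bm P(0)=\bm g(0)$ cancel exactly, and bound the remainder via the Hermite--Genocchi integral representation of the divided difference (this is the clean way to obtain the vector-norm bound $\|\bm g(i)-\bm P(i)\|\le\binom{i}{q}h^q\sup_t\|(\bm x^\ast)^{(q)}\|$ without a coordinate-wise mean-value argument; your Peano-kernel sign claim is also correct, since the kernel of a divided difference is a B-spline and hence nonnegative). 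Your route bypasses the factorization and the $\sum|b_i|$ estimate entirely and even yields the slightly sharper intermediate bound $\sum_i|\alpha_i|\binom{i}{q}$ before relaxing $\binom{i}{q}\le\binom{n}{q}$; the paper's operator-polynomial viewpoint, on the other hand, makes the connection to the finite-difference machinery of the deterministic SHARP analysis more transparent.
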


\begin{proof}
    See \cref{sec:proof_of_lem_smooth_solution}.
\end{proof}

Combining \cref{lem:alpha_norm,lem:smooth_solution} with \cref{thm:tracking_error} gives a tracking error bound for the polynomial regression.

\begin{theorem}\label{thm:tracking_error_polynomial}
    Let $q\in[1,p]$ be an arbitrarily fixed integer and $\bm\alpha\in\R^n$ be the coefficient vector defined in \cref{eq:alpha_matrix}.
    Suppose that $\bm x^\ast(t)$ is $q$-times differentiable in $t$ and $\sup_{t\geq0}\|(\bm x^\ast)^{(q)}(t)\|<\infty$, \cref{ass:problem} holds, and \cref{ass:correction,ass:empirical} hold.
    Set $n=\Theta(h^{-2q/(2q+1)})$.
    Then, we have
    \begin{gather}
        \limsup_{k\to\infty}e_k\leq O\prn*{\Delta_1+\gamma\Delta_2+(\Delta_2+1)h^{q/(2q+1)}}.\label{eq:tracking_error_polynomial}
    \end{gather}
\end{theorem}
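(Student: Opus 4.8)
The plan is to feed the three auxiliary lemmas into the master bound of \cref{thm:tracking_error} and then optimize over $n$. Since \cref{lem:smooth_solution} supplies \cref{ass:prediction} with $\Delta_3=\|\bm\alpha\|_1\binom{n}{q}h^q\sup_{t\geq0}\|(\bm x^\ast)^{(q)}(t)\|$, all hypotheses of \cref{thm:tracking_error} are in force, so I may quote its conclusion
\begin{gather}
    \limsup_{k\to\infty}e_k\leq\frac{\|\bm\alpha\|_1\Delta_1+(\gamma\|\bm\alpha\|_1+\|\bm\alpha\|)\Delta_2+\Delta_3}{1-\gamma\|\bm\alpha\|_1}.
\end{gather}
Throughout I would treat $p$, $q$, $\gamma$, and $\sup_{t\geq0}\|(\bm x^\ast)^{(q)}(t)\|$ as constants and analyze each piece as $h\to0$, hence $n\to\infty$.

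First I would control the denominator. By \cref{lem:alpha_norm}, $\|\bm\alpha\|_1=O(1)$, say $\|\bm\alpha\|_1\leq A$ uniformly in $n$; treating $\gamma$ as a fixed constant with $\gamma<1/A$ gives $1-\gamma\|\bm\alpha\|_1\geq1-\gamma A>0$, a positive constant, so $1/(1-\gamma\|\bm\alpha\|_1)=O(1)$ and the denominator is absorbed into the big-$O$. Next I would bound each numerator term via the norm estimates: from $\|\bm\alpha\|_1=O(1)$ we get $\|\bm\alpha\|_1\Delta_1=O(\Delta_1)$ and $\gamma\|\bm\alpha\|_1\Delta_2=O(\gamma\Delta_2)$; from $\|\bm\alpha\|=O(n^{-1/2})$ we get $\|\bm\alpha\|\Delta_2=O(n^{-1/2}\Delta_2)$; and using $\binom{n}{q}\leq n^q/q!=O(n^q)$ for fixed $q$ gives $\Delta_3=O(n^qh^q)$. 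Thus the whole right-hand side is $O(\Delta_1+\gamma\Delta_2+n^{-1/2}\Delta_2+n^qh^q)$.

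The final step is the bias--variance trade-off that fixes $n$. The two $h$-dependent rates are the variance-type term $n^{-1/2}$ (noise propagated from $\Delta_2$ through the regression) and the bias-type term $n^qh^q$ (the polynomial approximation error). Balancing $n^{-1/2}=n^qh^q$ gives $n^{q+1/2}=h^{-q}$, i.e.\ $n=\Theta(h^{-2q/(2q+1)})$, which is precisely the prescribed choice. Substituting it yields $n^{-1/2}=\Theta(h^{q/(2q+1)})$ and, since $q-\tfrac{2q^2}{2q+1}=\tfrac{q}{2q+1}$, also $n^qh^q=\Theta(h^{q/(2q+1)})$; recombining $n^{-1/2}\Delta_2$ and $n^qh^q$ into $(\Delta_2+1)h^{q/(2q+1)}$ gives the claimed $O(\Delta_1+\gamma\Delta_2+(\Delta_2+1)h^{q/(2q+1)})$.

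None of these steps is technically hard; the computation is essentially substitution and exponent arithmetic. The one point that demands care is keeping the denominator uniformly bounded below as $n\to\infty$: this forces $\gamma$ to be a fixed constant strictly below $1/\sup_n\|\bm\alpha\|_1$ rather than merely below the $n$-dependent threshold $\|\bm\alpha\|_1^{-1}$ of \cref{ass:additional}, and it also means the hidden constant in the final big-$O$ depends on $\gamma$ through the factor $1/(1-\gamma A)$ while $\gamma$ is displayed explicitly in the $\gamma\Delta_2$ term to expose the dependence on correction quality.
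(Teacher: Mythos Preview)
Your proposal is correct and follows essentially the same route as the paper: invoke \cref{lem:smooth_solution} to supply $\Delta_3$, plug into the bound of \cref{thm:tracking_error}, use \cref{lem:alpha_norm} and $\binom{n}{q}\leq n^q/q!$ to reduce the right-hand side to $O(\Delta_1+(\gamma+n^{-1/2})\Delta_2+n^qh^q)$, and substitute $n=\Theta(h^{-2q/(2q+1)})$. Your explicit discussion of the denominator control and the bias--variance balancing is a bit more detailed than the paper's terse treatment, but the argument is the same.
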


\begin{proof}
    \cref{thm:tracking_error,lem:smooth_solution} give
    \begin{align}
        \limsup_{k\to\infty}e_k
        &\leq\frac{1}{1-\gamma\|\bm\alpha\|_1}\prn*{\|\bm\alpha\|_1\Delta_1+(\gamma\|\bm\alpha\|_1+\|\bm\alpha\|)\Delta_2+\|\bm\alpha\|_1\binom {n}{q}h^{q}\sup_{t\geq0}\|(\bm x^\ast)^{(q)}(t)\|}\\
        &=O\prn*{\Delta_1+\prn*{\gamma+\frac1{\sqrt n}}\Delta_2+n^qh^{q}},
    \end{align}
    where \cref{lem:alpha_norm} and $\binom {n}{q}\leq \frac{n^q}{q!}$ are used in the equality.
    Using $n=\Theta(h^{-2q/(2q+1)})$ completes the proof.
\end{proof}

\Cref{thm:tracking_error_polynomial} implies that, if $\Delta_1=0$ (i.e., $\hat{\bm x}_k^\ast$ is unbiased) and $\gamma=O(h^{q/(2q+1)})$, the asymptotic tracking error is $O(h^{q/(2q+1)})$, which is smaller than the $O(h^{1/3})$ tracking error bound of TVSGD~\citep{cutler2021stochastic} when $q\geq2$.
When $h$ is sufficiently small, taking larger $p$, which is the maximum of $q$, leads better performance because the tracking error bound $O(h^{p/(2p+1)})$ becomes smaller.
However, it is not always true when $h$ is not sufficiently small because $\|\bm\alpha\|$ and $\|\bm\alpha\|_1$ depend on $p$ (see \cref{sec:proof_of_lem_alpha_norm} for details).

Even if $\Delta_1>0$, using a mini-batch of size $b$ to approximate $\hat\ell_k(x)$ can reduce $\Delta_1$ as shown in \cref{sec:empirical}.
In this case, taking $b=\Theta(h^{-r})$ for some $r>0$ and $n=\Theta(b^{-1/(2q+1)}h^{-2q/(2q+1)})$ gives the $O(\gamma h^{r/2}+h^{(r+1)q/(2q+1)})$ tracking error bound, where we used \cref{cor:empirical2} (cf. the same batch strategy to TVSGD~\citep{cutler2021stochastic} gives the $O(h^{(1+r)/3})$ tracking error bound).
This means that the tracking error can be made arbitrarily small by increasing $b$ at the cost of the computational complexity of the correction step.

\subsection{Online Learning-based Coefficients}\label{sec:online_learning}
Another choice is to start with an initial coefficient $\bm\alpha^1\in\R^n$ and update it iteratively by online learning.
This type of learning-based parameter tuning has been proposed in the study of non-time-varying optimization problems \citep{rubio2017convergence,cutkosky2023optimal,gao2025gradient,chu2025gradient,chu2025provable,jiang2025online,jiang2025improved}.
Thus, in time-varying settings, this approach is expected to find a good coefficient $\bm\alpha$ adapted to the underlying dynamics of the target point $\bm x^\ast(t)$ without prior knowledge, although the theoretical analysis is left for future work.

The prediction step can be written as $\hat{\bm x}_k=X_{k}\bm\alpha^k$.
Note that, given $\bm\alpha^k$, we can compute $\hat{\bm x}_k$ without $\phi_1(t),\ldots,\phi_p(t)$.
This prediction incurs a loss of $\hat\ell_k(X_k\bm\alpha^k)+\psi(X_k\bm\alpha^k)$ and the coefficient $\bm\alpha^k$ is updated by using an online optimization method.
For example, we can use the online gradient descent~\citep{zinkevich2003online}:
\begin{gather}
    \bm\alpha^{k+1}
    =\proj_{\mathcal A}\prn*{\bm\alpha^k-\eta_kX_k^\top\prn*{\nabla\hat\ell_k(X_k\bm\alpha^k)+\bm g^k}},\quad \bm g^k\in\partial \psi(X_k\bm\alpha^k)\label{eq:subgradient_descent}
\end{gather}
where $\eta_k>0$ is a step size and $\mathcal A\subset\R^n$ is a compact convex set defined by
\begin{gather}
    \mathcal A\coloneqq\Set{\bm\alpha\in\R^n}{\inner{\bm1}{\bm\alpha}=1,~\|\bm\alpha\|_2\leq D},\label{eq:constraint_set}
\end{gather}
for some $D\geq n^{-1/2}$.
We restrict the coefficient $\bm\alpha^k$ to a compact convex set $\mathcal A\subset\R^n$ because most online optimization algorithms require such a constraint to guarantee the theoretical performance.
The condition $\inner{\bm1}{\bm\alpha}=1$ ensures that the predicted point $\hat{\bm x}_k$ does not depend on the scaling and the translation of $x_{k-1},\ldots,x_{k-n}$.
This naturally holds if the coefficient vector $\bm\alpha^k$ is computed by \eqref{eq:predicted_point} with a given basis function $\bm\phi$ satisfying $\phi_1(t)=1$ as explained in \cref{sec:proposed}.
The condition $\|\bm\alpha\|_2\leq D$ is to ensure the boundedness of the feasible region.
Since a good $\bm\alpha$ must be included in $\mathcal A$, taking $D=1$ is sufficient considering \cref{lem:alpha_norm}.
The projection onto $\mathcal A$ can be computed explicitly.

We can also use the Ader algorithm \citep{zhang2018adaptive}, which enjoys a better dynamic regret bound than the online gradient descent.
\color{black}

\section{Numerical Experiment}\label{sec:numerical}
This section presents numerical experiments to demonstrate the performance of the proposed method.
All the experiments were conducted in Python 3.13.5 on a MacBook Air whose processor is Apple M4 and memory is 32 GB.

We consider the following time-varying LASSO problem:
\begin{align}
    \min_{\bm x\in\R^d}\E_{\bm y\sim\mathcal N(A\bm x^\ast(t),I_{d_y})}\bracket*{\frac12\|A\bm x-\bm y\|^2}+\lambda\|\bm x\|_1.
\end{align}
In the experiment, we take $d=5$, $n=10$, and randomly generate $A\in\R^{d_y\times d}$ via its singular value decomposition so that its minimal singular value is $\sqrt\mu$ and maximum is $\sqrt{L}$.
Hence, the first term of the objective function is $\mu$-strongly convex and $L$-smooth.
A time-varying target vector $\bm x^\ast(t)\in\R^d$ is defined by
\begin{align}
    x_i^\ast(t)\coloneqq\sin\prn*{t+\frac{2\pi(i-1)}d}+\frac12\sin\prn*{2t+\frac{2\pi(i-1)}d}
\end{align}
for $i=1,\ldots,d$.
We set $\bm x_0=(2,0,\ldots,0)^\top\in\R^d$, $h\in\{0.0005,0.001,0.002,0.005,0.01,0.02,0.05,0.1\}$, $L=h^{-2/3}/2$, $\mu=L/10$, and $\lambda\in\{0,1\}$.
We compare the average of $\|\hat{\bm x}_k-\bm x_k^\ast\|$ over all $k$ satisfying $4\leq t_k<5$ for each $h$.

The algorithms we compare are the following:
\begin{itemize}
    \item \textbf{Time-Varying Proximal Stochastic Gradient Descent (TVSGD)}. 
    The algorithm proposed by \citet{dixit2019online} and \citet{cutler2021stochastic}.
    Its step size is set to $h^{2/3}$ to guarantee better tracking performance.
    The estimator of the gradient is computed by sampling $y$.
    \item \textbf{Stochastic Predictor-Corrector based method (Stochastic PC)}.
    The algorithm proposed by \citet{maity2022predictor}.
    The step size is set to $h^{4/5}$.
    We compute the derivative estimators following the method described in the original paper.
    This method is only applicable to the case where the loss function is smooth, i.e., $\lambda=0$.
    \item \textbf{Static Contractive Filter (TV Contract)}.
    The algorithm proposed by \citet{simonetto2024nonlinear}.
    The parameters are set to $\rho=0.5$, $\Delta=0.1$, $\omega_1=\omega_2=0.9$, $Q=200I_d$, $R=50I_d$, and $P=C=5$ (the notation is the same as the original paper).
    \item \textbf{Stochastic SHARP method (Proposed)}.
    \cref{alg:proposed} with $\psi=0$.
    The correction step is computed by \cref{eq:proximal_gradient_method} with $\beta=1/L$.
    The coefficient $\bm \alpha$ is computed in three ways: one is by \cref{eq:alpha_2} with $n=\floor{h^{-4/5}}$ ($p=2$), another is by \cref{eq:alpha_matrix} with $(n,p)=(\floor{h^{-6/7}},3)$, and the other is by \cref{eq:subgradient_descent} with $n=\floor{h^{-1}}$, $\bm \alpha^0=\bm1/n$, $D=1$, and $\eta_k=0.01\cdot2^{-\floor{\log_2k}/2}$ (Online).
    This step size selection is based on the doubling trick \citep[Section 4.6]{cesa1997use}.
    The number of the correction step is set to $c_{\max}=30$.
\end{itemize}

\subsection{Experiment 1: No Regularization Case}
The results for $\lambda=0$ are shown in \cref{fig:no_regularization}.
The proposed method with $p=2$ achieves a similar performance to the Stochastic PC method in small $h$, while the computational cost is much lower.
The proposed method with $p=3$ also achieves good performance, but it is slightly worse than that with $p=2$.
This is because a large $p$ makes $\|\bm \alpha\|$ and $\|\bm \alpha\|_1$ large, which leads to a larger upper bound on the theoretical guarantee as stated in \cref{sec:tracking_error_polynomial}.
The proposed method with the online learning of the coefficient achieves good performance in some $h$, although it is unstable in smaller $h$.


\begin{figure}[tb]
    \centering
    \includegraphics[width=0.6\textwidth]{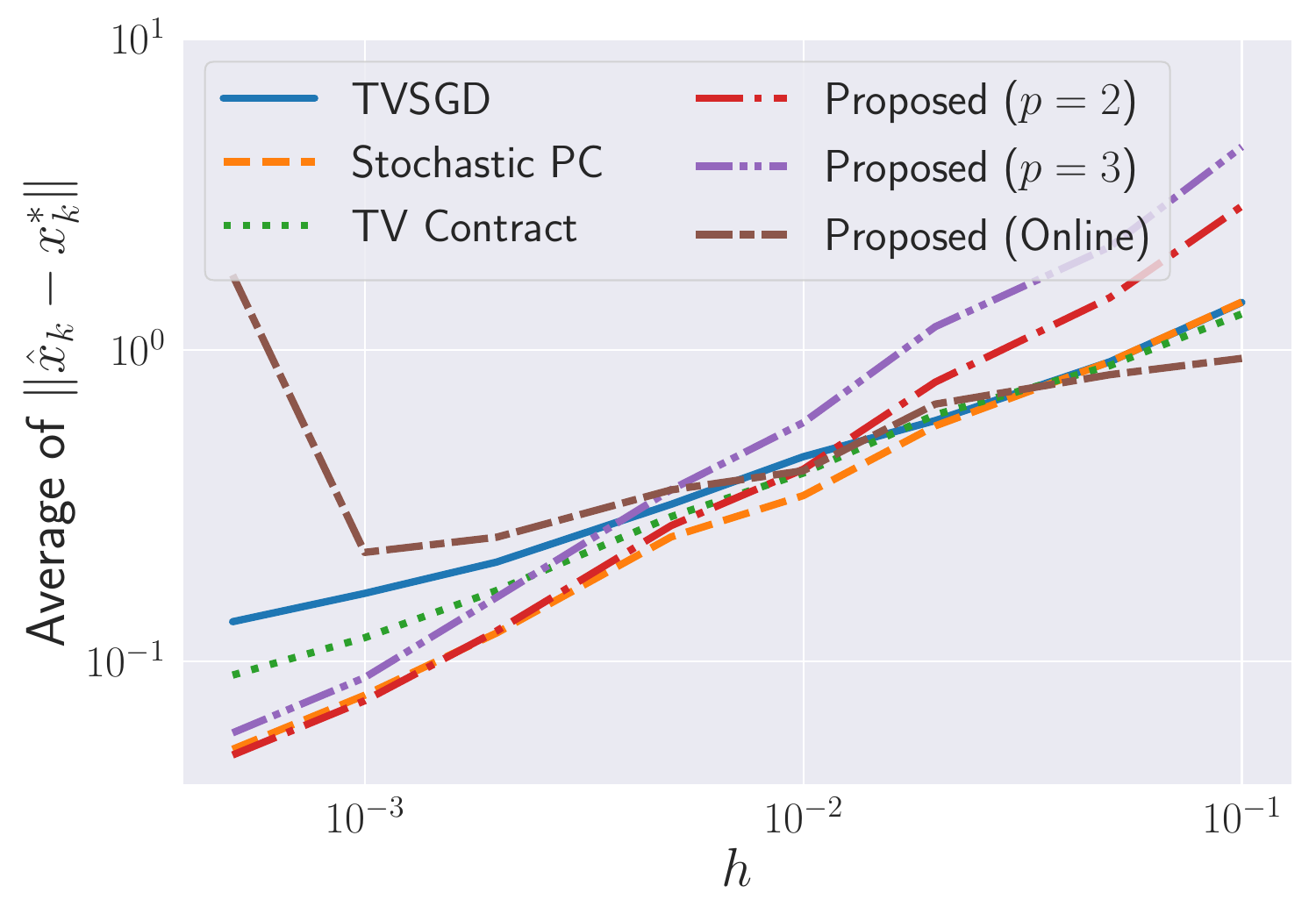}
    \caption{Results of Experiment 1. The average of $\|\hat{\bm x}_k-\bm x_k^\ast\|$ over all $k$ satisfying $4\leq t_k<5$.}
    \label{fig:no_regularization}
\end{figure}


\subsection{Experiment 2: Regularization Case}
The results for $\lambda=1$ are shown in \cref{fig:regularization}.
The proposed method with $p=2,3$ outperforms existing methods in small $h$, and especially $p=2$ achieves the best performance among all the methods.
The online learning-based method performs well in some $h$, but it is unstable in smaller $h$ as in Experiment~1.


\begin{figure}[tb]
    \centering
    \includegraphics[width=0.6\textwidth]{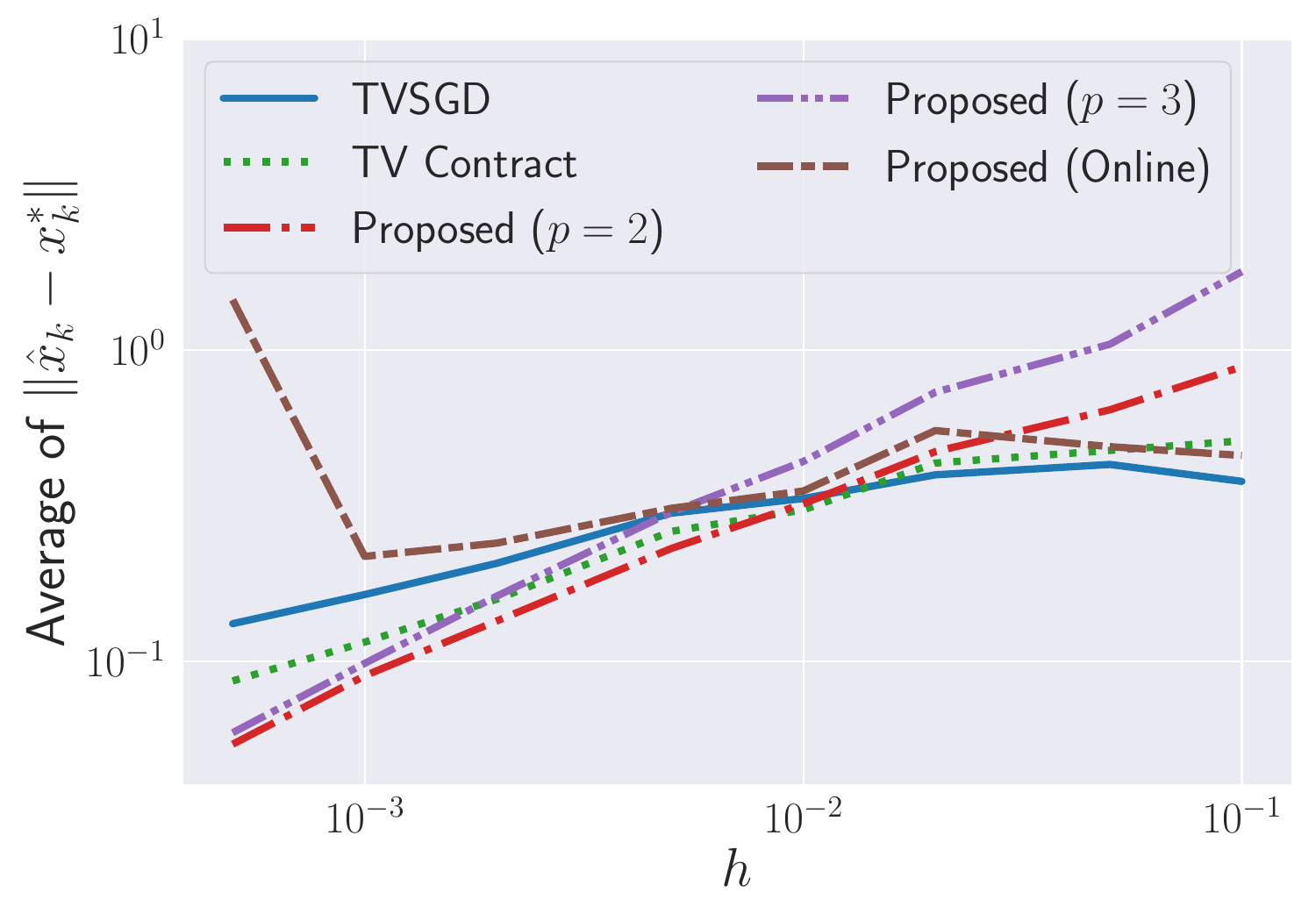}
    \caption{Results of Experiment 2. The average of $\|\hat{\bm x}_k-\bm x_k^\ast\|$ over all $k$ satisfying $4\leq t_k<5$.}
    \label{fig:regularization}
\end{figure}

\section{Conclusion}\label{sec:conclusion}
We proposed a new method for solving stochastic time-varying optimization problems.
The proposed method is based on the prediction-correction framework and uses a prediction step based on regression, which yields a prediction formula that can be computed by a linear combination of the previous solutions.
We derived a theoretical guarantee for the tracking error of the proposed method.
The numerical experiments showed that the proposed method achieves a similar performance to the existing methods while being computationally efficient.

\section*{Acknowledgments}
This work was partially supported by JSPS KAKENHI (23H03351, 24K23853, and 25KJ1123) and JST CREST (JPMJCR24Q2).

\bibliographystyle{abbrvnat}
\bibliography{ref}

\appendix

\section{Omitted Proofs in \cref{sec:tracking_error}}
\subsection{Technical Lemmas}\label{sec:technical_lemmas}
This section introduces some technical lemmas.
First, we provide some properties of strongly convex functions and smooth functions.
For the proof, see \citet{nesterov2018lectures}, for example.

\begin{lemma}\label{lem:strong_convexity}
    Suppose that $\varphi\colon\R^d\to\R$ is $\mu$-strongly convex.
    Then, for all $\bm x,\bm x'\in\R^d$, 
    \begin{gather}
        \inner{\nabla\varphi(\bm x)-\nabla\varphi(\bm x')}{\bm x-\bm x'}\geq\mu\|\bm x-\bm x'\|^2\quad\text{and}\label{eq:strong_convexity2}\\
        \nabla^2\varphi(\bm x)\succeq \mu I.\label{eq:strong_convexity3}
    \end{gather}
\end{lemma}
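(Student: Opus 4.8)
The plan is to derive both inequalities directly from the first-order characterization of $\mu$-strong convexity, namely $\varphi(\bm x)\geq\varphi(\bm x')+\inner{\nabla\varphi(\bm x')}{\bm x-\bm x'}+\frac\mu2\|\bm x-\bm x'\|^2$ for all $\bm x,\bm x'\in\R^d$, which is the same form as \eqref{eq:strong_convexity}.

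For the monotonicity estimate \eqref{eq:strong_convexity2}, I would write this first-order bound twice, once as stated and once with the roles of $\bm x$ and $\bm x'$ interchanged, and then add the two inequalities. The function values cancel on both sides, leaving $0\geq-\inner{\nabla\varphi(\bm x)-\nabla\varphi(\bm x')}{\bm x-\bm x'}+\mu\|\bm x-\bm x'\|^2$, since $\inner{\nabla\varphi(\bm x)}{\bm x'-\bm x}=-\inner{\nabla\varphi(\bm x)}{\bm x-\bm x'}$. Rearranging yields $\inner{\nabla\varphi(\bm x)-\nabla\varphi(\bm x')}{\bm x-\bm x'}\geq\mu\|\bm x-\bm x'\|^2$, which is exactly the claim.

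For the Hessian bound \eqref{eq:strong_convexity3}, which presupposes that $\varphi$ is twice differentiable, I would specialize \eqref{eq:strong_convexity2} to $\bm x'=\bm x+t\bm v$ for an arbitrary direction $\bm v\in\R^d$ and $t>0$, giving $\inner{\nabla\varphi(\bm x+t\bm v)-\nabla\varphi(\bm x)}{t\bm v}\geq\mu t^2\|\bm v\|^2$. Dividing by $t^2$ and letting $t\to0$, the difference quotient on the left converges to $\inner{\nabla^2\varphi(\bm x)\bm v}{\bm v}$ by definition of the Hessian, so $\inner{\nabla^2\varphi(\bm x)\bm v}{\bm v}\geq\mu\|\bm v\|^2$ for every $\bm v$, which is precisely $\nabla^2\varphi(\bm x)\succeq\mu I$.

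These are entirely classical facts, so there is no genuine obstacle; the only point requiring a little care is the limit argument for \eqref{eq:strong_convexity3}, where one must invoke twice differentiability and the definition of the Hessian to pass from the first-order difference quotient to the quadratic form. Alternatively, if one prefers to avoid the limit, \eqref{eq:strong_convexity3} follows from a second-order Taylor expansion of $\varphi$ combined with \eqref{eq:strong_convexity}. Since the statement is standard, simply citing \citet{nesterov2018lectures} would also suffice.
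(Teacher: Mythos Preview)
Your proposal is correct, and in fact you provide more detail than the paper does: the paper simply states ``For the proof, see \citet{nesterov2018lectures}, for example'' without giving any argument. Your derivation---adding the two swapped first-order inequalities for \eqref{eq:strong_convexity2} and passing to the limit along a ray for \eqref{eq:strong_convexity3}---is the standard textbook route, and your closing remark that a citation would suffice matches exactly what the paper does.
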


\begin{lemma}\label{lem:smooth}
    Suppose that $\varphi\colon\R^d\to\R$ is $L_{2,0}$-smooth.
    Then, for all $\bm x,\bm x'\in\R^d$, 
    \begin{gather}
        \|\nabla\varphi(\bm x)-\nabla\varphi(\bm x')\|\leq L_{2,0}\|\bm x-\bm x'\|.\label{eq:smooth2}
    \end{gather}
\end{lemma}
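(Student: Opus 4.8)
The plan is to derive the gradient-Lipschitz bound~\cref{eq:smooth2} from the quadratic upper bound that defines $L_{2,0}$-smoothness in \cref{ass:smoothness}. One subtlety is worth flagging at the outset: the upper bound alone controls $\nabla^2\varphi$ only from above, whereas \cref{eq:smooth2} is a two-sided statement, so the upper bound by itself is \emph{not} sufficient. What closes the gap is convexity of $\varphi$, which holds throughout the paper since it is implied by the strong convexity in \cref{ass:strong_convexity}. I would therefore read the lemma as concerning a convex $L_{2,0}$-smooth function and invoke the standard equivalence recorded in \citet{nesterov2018lectures}.

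First I would reduce to a single shifted function. Fix $\bm x,\bm x'\in\R^d$ and set $g(\bm z)\coloneqq\varphi(\bm z)-\inner{\nabla\varphi(\bm x')}{\bm z}$. Then $g$ is convex, inherits the same quadratic upper bound with constant $L_{2,0}$, and satisfies $\nabla g(\bm x')=\bm 0$, so $\bm x'$ is its global minimizer.

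The key quantitative step is a descent inequality obtained by minimizing the upper bound along the negative gradient. Substituting $\bm z=\bm x-\tfrac1{L_{2,0}}\nabla g(\bm x)$ into the quadratic upper bound for $g$ gives $g(\bm x')\le g(\bm x)-\tfrac1{2L_{2,0}}\|\nabla g(\bm x)\|^2$, that is,
\[
\frac1{2L_{2,0}}\|\nabla\varphi(\bm x)-\nabla\varphi(\bm x')\|^2\le\varphi(\bm x)-\varphi(\bm x')-\inner{\nabla\varphi(\bm x')}{\bm x-\bm x'}.
\]
Swapping the roles of $\bm x$ and $\bm x'$ yields the symmetric inequality, and adding the two makes the linear terms combine into an inner product, giving the co-coercivity (Baillon--Haddad-type) estimate
\[
\frac1{L_{2,0}}\|\nabla\varphi(\bm x)-\nabla\varphi(\bm x')\|^2\le\inner{\nabla\varphi(\bm x)-\nabla\varphi(\bm x')}{\bm x-\bm x'}.
\]
Applying the Cauchy--Schwarz inequality to the right-hand side and dividing by $\|\nabla\varphi(\bm x)-\nabla\varphi(\bm x')\|$ (the case where it vanishes being trivial) produces~\cref{eq:smooth2}.

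I expect the co-coercivity step to be the crux: the descent inequality and the symmetrization are exactly where convexity is genuinely used, and this is the point at which a proof relying only on the upper bound would break down. The remaining manipulations are routine, and if one prefers to assume $\varphi\in C^2$, an alternative route is to bound the Hessian via Taylor expansion, though that still needs the lower bound $\nabla^2\varphi\succeq\bm 0$ supplied by convexity.
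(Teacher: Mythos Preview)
The paper does not actually supply a proof of this lemma: it simply refers the reader to \citet{nesterov2018lectures}. Your argument is correct and is precisely the standard route found there (Theorem~2.1.5), namely deriving the co-coercivity inequality from the descent bound applied to the shifted function $g(\bm z)=\varphi(\bm z)-\inner{\nabla\varphi(\bm x')}{\bm z}$, symmetrizing, and finishing with Cauchy--Schwarz.

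Your remark that convexity is needed is well taken and worth making explicit, since the paper defines ``$L_{2,0}$-smooth'' solely via the one-sided quadratic upper bound in \cref{ass:smoothness}, and that bound alone does \emph{not} yield \cref{eq:smooth2} (e.g., $\varphi(\bm x)=-\tfrac{M}{2}\|\bm x\|^2$ satisfies the upper bound with any $L_{2,0}\ge 0$ but has $M$-Lipschitz gradient). In the paper's setting this is harmless because every application of the lemma is to a function that is simultaneously strongly convex by \cref{ass:strong_convexity}, which is exactly the missing ingredient you identify.
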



Next, we prepare an inequality that minimizers of convex functions satisfy.

\begin{lemma}
    \label{lem:optimality_property}
    Suppose that $\varphi,\tilde\varphi\colon\R^d\to\R$ are differentiable convex functions and $\psi\colon\R^d\to\R$ is a convex function.
    Let $\bm x^\ast$ and $\tilde {\bm x}^\ast$ be minimizers of $\varphi+\psi$ and $\tilde\varphi+\psi$, respectively.
    Then, we have
    \begin{gather}
        \inner{\nabla\varphi(\bm x^\ast)-\nabla\tilde\varphi(\tilde {\bm x}^\ast)}{\bm x^\ast-\tilde {\bm x}^\ast}\leq0.\label{eq:optimality_property}
    \end{gather}
\end{lemma}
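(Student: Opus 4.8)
The plan is to combine the first-order optimality conditions for the two composite problems with the monotonicity of the subdifferential of the convex function $\psi$. The fact that $\psi$ is real-valued (hence finite everywhere) is convenient, because it makes the subdifferential sum rule hold automatically, so that the optimality condition for the nonsmooth objective $\varphi+\psi$ can be written cleanly.

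First I would record the optimality conditions. Since $\bm x^\ast$ minimizes the convex function $\varphi+\psi$ and $\varphi$ is differentiable, Fermat's rule together with the sum rule gives $\bm 0\in\nabla\varphi(\bm x^\ast)+\partial\psi(\bm x^\ast)$, that is, $-\nabla\varphi(\bm x^\ast)\in\partial\psi(\bm x^\ast)$. By the same reasoning applied to $\tilde\varphi+\psi$, we get $-\nabla\tilde\varphi(\tilde{\bm x}^\ast)\in\partial\psi(\tilde{\bm x}^\ast)$.

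Next I would invoke monotonicity of $\partial\psi$. For any $\bm g\in\partial\psi(\bm x)$ and $\bm g'\in\partial\psi(\bm x')$, the two subgradient inequalities $\psi(\bm x')\geq\psi(\bm x)+\inner{\bm g}{\bm x'-\bm x}$ and $\psi(\bm x)\geq\psi(\bm x')+\inner{\bm g'}{\bm x-\bm x'}$ add up to $\inner{\bm g-\bm g'}{\bm x-\bm x'}\geq0$. Applying this with $\bm g=-\nabla\varphi(\bm x^\ast)$ at $\bm x=\bm x^\ast$ and $\bm g'=-\nabla\tilde\varphi(\tilde{\bm x}^\ast)$ at $\bm x'=\tilde{\bm x}^\ast$ yields $\inner{\nabla\tilde\varphi(\tilde{\bm x}^\ast)-\nabla\varphi(\bm x^\ast)}{\bm x^\ast-\tilde{\bm x}^\ast}\geq0$, which is exactly \cref{eq:optimality_property} after multiplying by $-1$.

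I do not expect any genuine obstacle: the argument is a routine pairing of optimality conditions with subgradient monotonicity, and no strong convexity or smoothness of $\varphi$, $\tilde\varphi$ is needed. The only point to state carefully is the passage from ``minimizer of $\varphi+\psi$'' to the inclusion $-\nabla\varphi(\bm x^\ast)\in\partial\psi(\bm x^\ast)$, which relies on the subdifferential sum rule; this holds because $\varphi$ is differentiable and $\psi$ is a real-valued convex function, so that $\partial(\varphi+\psi)(\bm x)=\nabla\varphi(\bm x)+\partial\psi(\bm x)$ without any qualification condition.
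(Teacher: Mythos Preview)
Your proof is correct and follows essentially the same approach as the paper: both derive the optimality inclusions $-\nabla\varphi(\bm x^\ast)\in\partial\psi(\bm x^\ast)$ and $-\nabla\tilde\varphi(\tilde{\bm x}^\ast)\in\partial\psi(\tilde{\bm x}^\ast)$, then add the two subgradient inequalities to obtain \cref{eq:optimality_property}. The only cosmetic difference is that you name the summing step ``monotonicity of $\partial\psi$'' while the paper writes out the two inequalities and sums them directly.
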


\begin{proof}
    The optimality condition gives
    \begin{align}
        \begin{dcases*}
            -\nabla\varphi(\bm x^\ast)\in\partial\psi(\bm x^\ast),\\
            -\nabla\tilde\varphi(\tilde {\bm x}^\ast)\in\partial\psi(\tilde {\bm x}^\ast).
        \end{dcases*}
    \end{align}
    The definition of the subdifferential $\partial\psi$ yields
    \begin{align}
        \begin{dcases*}
            \psi(\tilde {\bm x}^\ast)\geq\psi(\bm x^\ast)-\inner{\nabla\varphi(\bm x^\ast)}{\tilde {\bm x}^\ast-\bm x^\ast},\\
            \psi(\bm x^\ast)\geq\psi(\tilde {\bm x}^\ast)-\inner{\nabla\tilde\varphi(\tilde {\bm x}^\ast)}{\bm x^\ast-\tilde {\bm x}^\ast}.
        \end{dcases*}
    \end{align}
    Summing up these inequalities completes the proof.
\end{proof}

Next, we justify the interchange of the expectation and the gradient.

\begin{lemma}\label{lem:gradient_expectation}
    Suppose that \cref{ass:existence,ass:smoothness} hold.
    Then, we have
    \begin{gather}
        \nabla\E[\ell(\bm x;\bm y)]=\E[\nabla_{\bm x}\ell(\bm x;\bm y)]\label{eq:gradient_expectation}
    \end{gather}
    for all $\bm x\in\R^d$.
\end{lemma}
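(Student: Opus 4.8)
The plan is to differentiate under the expectation via the dominated convergence theorem, with the smoothness assumption supplying the integrable dominating function. Write $g(\bm x)\coloneqq\E_{\bm y\sim\mathcal P(t)}[\ell(\bm x;\bm y)]$, which is finite for every $\bm x$ by \cref{ass:existence}. Fix $\bm x\in\R^d$ and a coordinate direction $\bm e_i$. For a nonzero scalar $s$ I would write the difference quotient as an expectation,
\[
    \frac{g(\bm x+s\bm e_i)-g(\bm x)}{s}=\E\bracket*{\frac{\ell(\bm x+s\bm e_i;\bm y)-\ell(\bm x;\bm y)}{s}},
\]
which is legitimate because both expectations exist. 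By the mean value theorem applied to the scalar map $s\mapsto\ell(\bm x+s\bm e_i;\bm y)$ for each fixed $\bm y$, there is some $\theta=\theta(s,\bm y)\in[0,1]$ for which the integrand equals $(\nabla_{\bm x}\ell(\bm x+\theta s\bm e_i;\bm y))_i$.

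Next I would verify the two hypotheses of the dominated convergence theorem. Pointwise convergence of the integrand to $(\nabla_{\bm x}\ell(\bm x;\bm y))_i$ as $s\to0$ follows from continuity of $\bm x\mapsto\nabla_{\bm x}\ell(\bm x;\bm y)$, which is guaranteed by $L_{2,0}$-smoothness through \cref{lem:smooth}. For the dominating function, the same Lipschitz bound gives, for all $|s|\leq1$,
\[
    \abs*{(\nabla_{\bm x}\ell(\bm x+\theta s\bm e_i;\bm y))_i}\leq\norm{\nabla_{\bm x}\ell(\bm x+\theta s\bm e_i;\bm y)}\leq\norm{\nabla_{\bm x}\ell(\bm x;\bm y)}+L_{2,0},
\]
and the right-hand side is integrable because $\E[\norm{\nabla_{\bm x}\ell(\bm x;\bm y)}]$ exists by \cref{ass:existence}. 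Applying dominated convergence then passes the limit inside the expectation, so the $i$-th partial derivative of $g$ exists and equals $\E[(\nabla_{\bm x}\ell(\bm x;\bm y))_i]$.

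Finally I would upgrade existence of the partial derivatives to full differentiability with the claimed gradient. Since the above holds for every coordinate $i$, all partials of $g$ exist and assemble into $\E[\nabla_{\bm x}\ell(\bm x;\bm y)]$. This map is Lipschitz, hence continuous, in $\bm x$, since $\norm{\E[\nabla_{\bm x}\ell(\bm x;\bm y)]-\E[\nabla_{\bm x}\ell(\bm x';\bm y)]}\leq\E[\norm{\nabla_{\bm x}\ell(\bm x;\bm y)-\nabla_{\bm x}\ell(\bm x';\bm y)}]\leq L_{2,0}\norm{\bm x-\bm x'}$ by Jensen's inequality and \cref{lem:smooth}. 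Continuity of all partial derivatives makes $g$ continuously differentiable, whence $\nabla g(\bm x)=\E[\nabla_{\bm x}\ell(\bm x;\bm y)]$, which is exactly \cref{eq:gradient_expectation}.

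I expect the main obstacle to be the construction of the dominating function: it is precisely \cref{ass:smoothness} that keeps the intermediate-point gradient $\norm{\nabla_{\bm x}\ell(\bm x+\theta s\bm e_i;\bm y)}$ controlled by the base-point gradient plus a constant, uniformly in $s$ and $\bm y$. Without smoothness the integrands could fail to be uniformly dominated by an integrable function and the interchange would break down; everything else is a routine application of the mean value theorem and dominated convergence.
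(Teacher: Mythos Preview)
Your proof is correct and follows essentially the same approach as the paper: bound the difference quotient by $\|\nabla_{\bm x}\ell(\bm x;\bm y)\|$ plus a constant using the $L_{2,0}$-smoothness, then apply dominated convergence with \cref{ass:existence} furnishing integrability. The paper uses the fundamental theorem of calculus along an arbitrary unit direction rather than the mean value theorem along coordinate axes, but this is a cosmetic difference; your additional step verifying continuity of the partials to upgrade to full differentiability is, if anything, slightly more careful than the paper's argument.
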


\begin{proof}
    For all $\bm v\in\R^d$ satisfying $\|\bm v\|=1$ and for all $h>0$, we have
    \begin{align}
        \abs{\frac{\ell(\bm x+h\bm v;\bm y)-\ell(\bm x;\bm y)}h}
        &=\abs{\int_0^1\inner{\nabla_{\bm x}\ell(\bm x+th\bm v;\bm y)}{\bm v}\dd t}\\
        &\leq\int_0^1\mynorm*{\nabla_{\bm x}\ell(\bm x+th\bm v;\bm y)}\dd t\\
        &\leq\mynorm*{\nabla_{\bm x}\ell(\bm x;\bm y)}+\int_0^1\mynorm*{\nabla_{\bm x}\ell(\bm x+th\bm v;\bm y)-\nabla_{\bm x}\ell(\bm x;\bm y)}\dd t\\
        &\leq\mynorm*{\nabla_{\bm x}\ell(\bm x;\bm y)}+L_{2,0}\int_0^1th\dd t\\
        &=\mynorm*{\nabla_{\bm x}\ell(\bm x;\bm y)}+\frac{L_{2,0}h}2,
    \end{align}
    where the first equality follows from the fundamental theorem of calculus, the first inequality from the Cauchy--Schwarz inequality, and the third inequality follows from \cref{ass:smoothness,eq:smooth2}.
    Since $\mynorm*{\nabla_{\bm x}\ell(\bm x;\bm y)}+\frac12L_{2,0}h$ is integrable in $\bm y$ by \cref{ass:existence}, the dominated convergence theorem guarantees 
    \begin{align}
        \inner{\nabla\E[\ell(\bm x;\bm y)]}{\bm v}
        &=\lim_{h\to0}\E\bracket*{\frac{\ell(\bm x+h\bm v;\bm y)-\ell(\bm x;\bm y)}h}\\
        &=\E\bracket*{\lim_{h\to0}\frac{\ell(\bm x+h\bm v;\bm y)-\ell(\bm x;\bm y)}h}\\
        &=\inner{\E[\nabla_{\bm x}\ell(\bm x;\bm y)]}{\bm v}
    \end{align}
    for all $\bm v\in\R^d$ satisfying $\|\bm v\|=1$, which implies \cref{eq:gradient_expectation}.
\end{proof}

Repeating the same discussion on each component of $\nabla_{\bm x}\ell(\bm x;\bm y)$, we get the following corollary:

\begin{corollary}\label{cor:hessian_expectation}
    Suppose that \cref{ass:existence} holds, $\E[\nabla_{\bm x\bm x}\ell(\bm x;\bm y)]$ exists for all $\bm x\in\R^d$, and $\nabla_{\bm x\bm x}\ell(\bm x;\bm y)$ is $L_{3,0}$-Lipschitz continuous in $\bm x$.
    Then, we have
    \begin{gather}
        \nabla^2\E[\ell(\bm x;\bm y)]=\E[\nabla_{\bm x\bm x}\ell(\bm x;\bm y)]\label{eq:hessian_expectation}
    \end{gather}
    for all $\bm x\in\R^d$.
\end{corollary}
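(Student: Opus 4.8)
The plan is to differentiate the identity of \cref{lem:gradient_expectation} one more time, applying the argument of that lemma coordinate-wise. By \cref{lem:gradient_expectation} we have $\nabla\E[\ell(\bm x;\bm y)]=\E[\nabla_{\bm x}\ell(\bm x;\bm y)]$, whose $j$-th component is $\E[g_j(\bm x;\bm y)]$, where $g_j(\bm x;\bm y)\coloneqq(\nabla_{\bm x}\ell(\bm x;\bm y))_j$. Thus $\nabla^2\E[\ell(\bm x;\bm y)]$ is the Jacobian of the vector field $\bm x\mapsto\E[\nabla_{\bm x}\ell(\bm x;\bm y)]$, and since $\nabla_{\bm x}g_j(\bm x;\bm y)$ is precisely the $j$-th column of $\nabla_{\bm x\bm x}\ell(\bm x;\bm y)$, it suffices to show $\nabla\E[g_j(\bm x;\bm y)]=\E[\nabla_{\bm x}g_j(\bm x;\bm y)]$ for each $j$ and then stack the columns. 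This is exactly the statement of \cref{lem:gradient_expectation} with the scalar function $\ell$ replaced by $g_j$, so the whole proof reduces to re-running that argument with the Hessian playing the role of the gradient.

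Concretely, I would fix a unit vector $\bm v$ and form the difference quotient $(g_j(\bm x+h\bm v;\bm y)-g_j(\bm x;\bm y))/h$. By the fundamental theorem of calculus this equals $\int_0^1\inner{\nabla_{\bm x}g_j(\bm x+th\bm v;\bm y)}{\bm v}\dd t$, and the triangle inequality together with the $L_{3,0}$-Lipschitz continuity of $\nabla_{\bm x\bm x}\ell(\cdot;\bm y)$ (which bounds $\mynorm*{\nabla_{\bm x}g_j(\bm x+th\bm v;\bm y)-\nabla_{\bm x}g_j(\bm x;\bm y)}\leq L_{3,0}th$ since each row of the Hessian is itself $L_{3,0}$-Lipschitz) yields the pointwise bound $\mynorm*{\nabla_{\bm x}g_j(\bm x;\bm y)}+L_{3,0}h/2$ on its absolute value. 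This is the verbatim analogue of the bound derived in \cref{lem:gradient_expectation}, with the smoothness constant $L_{2,0}$ replaced by the Hessian Lipschitz constant $L_{3,0}$. The dominated convergence theorem then permits exchanging $\lim_{h\to0}$ with $\E[\cdot]$, giving $\inner{\nabla\E[g_j(\bm x;\bm y)]}{\bm v}=\inner{\E[\nabla_{\bm x}g_j(\bm x;\bm y)]}{\bm v}$ for every unit $\bm v$, hence the component identity.

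The step I expect to require the most care is verifying integrability of the dominating function, which is what legitimizes the dominated convergence argument. In \cref{lem:gradient_expectation} this came from \cref{ass:existence} combined with smoothness; here one must instead argue that the assumed existence of $\E[\nabla_{\bm x\bm x}\ell(\bm x;\bm y)]$ means every entry of the Hessian is integrable, so that each row norm $\mynorm*{\nabla_{\bm x}g_j(\bm x;\bm y)}$ is integrable in $\bm y$ (say via the Frobenius norm bounding the row norms). Once this is in place, everything else is a routine repetition of the scalar argument applied to each $g_j$, and assembling the $p$ component identities produces the matrix equality $\nabla^2\E[\ell(\bm x;\bm y)]=\E[\nabla_{\bm x\bm x}\ell(\bm x;\bm y)]$.
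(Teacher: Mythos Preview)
Your proposal is correct and follows exactly the approach the paper intends: the paper's proof of this corollary is the single sentence ``Repeating the same discussion on each component of $\nabla_{\bm x}\ell(\bm x;\bm y)$,'' and what you have written is precisely that discussion spelled out in detail, with $L_{3,0}$ replacing $L_{2,0}$ and the assumed integrability of $\nabla_{\bm x\bm x}\ell$ replacing \cref{ass:existence} in the dominated-convergence step.
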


\subsection{Relation between True Solution and Empirical Solution}\label{sec:empirical}
This subsection presents relations between the true solution $\bm x_k^\ast=\argmin_{\bm x}f(\bm x;t_k)$ and the empirical solution $\hat{\bm x}_k^\ast=\argmin_{\bm x}\hat f_k(\bm x)$ when 
\begin{align}
    \hat\ell_k(\bm x)=\frac1b\sum_{i=1}^b\ell(\bm x;\bm y_{k,i}),
\end{align}
where $\bm y_{k,1},\ldots,\bm y_{k,b}$ are i.i.d. samples from the data distribution.

\begin{proposition}\label{prop:empirical1}
    Suppose \cref{ass:existence,ass:gradient_variance,ass:strong_convexity,ass:smoothness}.
    Then, the following holds for all $k\geq0$:
    \begin{gather}
        \E\bracket*{\|\hat{\bm x}_k^\ast-\bm x_k^\ast\|^2}\leq\frac{\sigma_1^2}{\mu^2b}.\label{eq:empirical1}
    \end{gather}
\end{proposition}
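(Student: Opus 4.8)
The plan is to compare the population minimizer $\bm x_k^\ast$ with the empirical minimizer $\hat{\bm x}_k^\ast$ by means of the optimality inequality in \cref{lem:optimality_property}, turn the resulting inner-product estimate into a bound on $\|\hat{\bm x}_k^\ast-\bm x_k^\ast\|$ via strong convexity, and then dispatch the randomness by a routine i.i.d.\ variance calculation at the fixed point $\bm x_k^\ast$.

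Concretely, I would apply \cref{lem:optimality_property} with $\varphi(\bm x)=\E_{\bm y\sim\mathcal P(t_k)}[\ell(\bm x;\bm y)]$ and $\tilde\varphi(\bm x)=\hat\ell_k(\bm x)=\frac1b\sum_{i=1}^b\ell(\bm x;\bm y_{k,i})$, both differentiable and convex under \cref{ass:strong_convexity}, together with the shared regularizer $\psi$. This gives $\inner{\nabla\varphi(\bm x_k^\ast)-\nabla\tilde\varphi(\hat{\bm x}_k^\ast)}{\bm x_k^\ast-\hat{\bm x}_k^\ast}\le0$. Here I write $\bar{\bm g}_k\coloneqq\nabla\varphi(\bm x_k^\ast)=\E[\nabla_{\bm x}\ell(\bm x_k^\ast;\bm y)]$, where the second equality is the gradient--expectation interchange supplied by \cref{lem:gradient_expectation}.

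The key maneuver --- and the step I expect to require the most care --- is to add and subtract $\nabla\tilde\varphi(\bm x_k^\ast)$, i.e.\ to evaluate the empirical gradient at the \emph{deterministic} point $\bm x_k^\ast$ rather than at the random point $\hat{\bm x}_k^\ast$. This rewrites the inequality as
\[
\inner{\nabla\tilde\varphi(\bm x_k^\ast)-\nabla\tilde\varphi(\hat{\bm x}_k^\ast)}{\bm x_k^\ast-\hat{\bm x}_k^\ast}\le\inner{\nabla\tilde\varphi(\bm x_k^\ast)-\bar{\bm g}_k}{\bm x_k^\ast-\hat{\bm x}_k^\ast}.
\]
I would then lower-bound the left-hand side by $\mu\|\bm x_k^\ast-\hat{\bm x}_k^\ast\|^2$, using that $\tilde\varphi$ inherits $\mu$-strong convexity from $\ell$ (via \cref{eq:strong_convexity2}), and upper-bound the right-hand side by Cauchy--Schwarz, yielding the deterministic estimate $\|\hat{\bm x}_k^\ast-\bm x_k^\ast\|\le\mu^{-1}\|\nabla\hat\ell_k(\bm x_k^\ast)-\bar{\bm g}_k\|$.

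Finally, squaring and taking expectations reduces the claim to bounding $\E[\|\nabla\hat\ell_k(\bm x_k^\ast)-\bar{\bm g}_k\|^2]$. Since $\nabla\hat\ell_k(\bm x_k^\ast)-\bar{\bm g}_k=\frac1b\sum_{i=1}^b(\nabla\ell(\bm x_k^\ast;\bm y_{k,i})-\bar{\bm g}_k)$ is an average of $b$ i.i.d.\ mean-zero vectors, the cross terms vanish by independence and the variance equals $\frac1b\E[\|\nabla\ell(\bm x_k^\ast;\bm y)-\bar{\bm g}_k\|^2]$, which \cref{ass:gradient_variance} bounds by $\sigma_1^2/b$. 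Combining this with the deterministic estimate gives $\E[\|\hat{\bm x}_k^\ast-\bm x_k^\ast\|^2]\le\sigma_1^2/(\mu^2 b)$. Apart from the interchange of gradient and expectation, which is exactly what \cref{lem:gradient_expectation} provides under \cref{ass:existence,ass:smoothness}, every step is elementary, so I anticipate no serious obstacle beyond correctly anchoring the empirical gradient at the deterministic point $\bm x_k^\ast$.
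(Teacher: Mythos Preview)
Your proposal is correct and follows essentially the same route as the paper's proof: apply \cref{lem:optimality_property} with $\varphi=\E[\ell(\cdot;\bm y)]$ and $\tilde\varphi=\hat\ell_k$, combine with the $\mu$-strong convexity of $\hat\ell_k$ (anchored at the deterministic point $\bm x_k^\ast$) and Cauchy--Schwarz to get $\mu\|\hat{\bm x}_k^\ast-\bm x_k^\ast\|\le\|\nabla\hat\ell_k(\bm x_k^\ast)-\E[\nabla\ell(\bm x_k^\ast;\bm y)]\|$, then square, take expectations, and use independence together with \cref{ass:gradient_variance}. The only cosmetic difference is that the paper states the optimality inequality first and then invokes strong convexity, whereas you phrase the combination as ``add and subtract $\nabla\tilde\varphi(\bm x_k^\ast)$''; the underlying computation is identical.
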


\begin{proof}
    \Cref{lem:optimality_property} with $\varphi=\E[\ell(\cdot;\bm y)]$ and $\tilde\varphi=\hat\ell_k$ gives
    \begin{gather}
        \inner{\nabla\hat \ell_k(\hat{\bm x}_k^\ast)}{\bm x_k^\ast-\hat{\bm x}_k^\ast}\geq\inner{\E[\nabla\ell(\bm x_k^\ast;\bm y)]}{\bm x_k^\ast-\hat{\bm x}_k^\ast},\label{eq:empirical1_2}
    \end{gather}
    where \cref{lem:gradient_expectation} is used.
    By combining this inequality with \cref{eq:strong_convexity2} for $\varphi=\hat\ell_k$, we obtain
    \begin{align}
        \mu\|\bm x_k^\ast-\hat{\bm x}_k^\ast\|^2
        &\leq\inner{\nabla\hat\ell_k(\bm x_k^\ast)-\nabla\hat\ell_k(\hat{\bm x}_k^\ast)}{\bm x_k^\ast-\hat{\bm x}_k^\ast}\\
        &\leq\inner{\nabla\hat\ell_k(\bm x_k^\ast)-\E[\nabla\ell(\bm x_k^\ast;\bm y)]}{\bm x_k^\ast-\hat{\bm x}_k^\ast}\\
        &\leq\|\nabla\hat\ell_k(\bm x_k^\ast)-\E[\nabla\ell(\bm x_k^\ast;\bm y)]\|\|\bm x_k^\ast-\hat{\bm x}_k^\ast\|,
    \end{align}
    where the first inequality follows from \cref{eq:strong_convexity2}, the second inequality from \cref{eq:empirical1_2}, and the last inequality follows from the Cauchy--Schwarz inequality.
    Therefore, we have
    \begin{gather}
        \mu^2\|\bm x_k^\ast-\hat{\bm x}_k^\ast\|^2\leq\|\nabla\hat\ell_k(\bm x_k^\ast)-\E[\nabla\ell(\bm x_k^\ast;\bm y)]\|^2.
    \end{gather}
    Taking the expectation with respect to the samples $\bm y_{k,1},\ldots,\bm y_{k,b}$ gives
    \begin{align}
        \mu^2\E\bracket*{\|\bm x_k^\ast-\hat{\bm x}_k^\ast\|^2}
        &\leq \E\bracket*{\|\nabla\hat\ell_k(\bm x_k^\ast)-\E[\nabla\ell(\bm x_k^\ast;\bm y)]\|^2}\\
        &=\E\bracket*{\mynorm*{\frac1b\sum_{i=1}^b\nabla \ell(\bm x_k^\ast;\bm y_{k,i})-\E[\nabla\ell(\bm x_k^\ast;\bm y)]}^2}\\
        &=\frac1{b^2}\E\bracket*{\mynorm*{\sum_{i=1}^b(\nabla \ell(\bm x_k^\ast;\bm y_{k,i})-\E[\nabla \ell(\bm x_k^\ast;\bm y)])}^2}\\
        &=\frac1{b^2}\sum_{i=1}^b\E\bracket*{\mynorm*{\nabla \ell(\bm x_k^\ast;\bm y_{k,i})-\E[\nabla \ell(\bm x_k^\ast;\bm y)]}^2}\\
        &\leq\frac{\sigma_1^2}{b},\label{eq:empirical1_1}
    \end{align}
    where the third equality follows from the independence of the samples $\bm y_{k,1},\ldots,\bm y_{k,b}$.
\end{proof}

Applying Jensen's inequality to \cref{prop:empirical1} gives the following corollary:

\begin{corollary}\label{cor:empirical2}
    Under the same assumption as \cref{prop:empirical1}, the following holds for all $k\geq0$:
    \begin{gather}
        \|\E\bracket*{\hat {\bm x}_k^\ast}-\bm x_k^\ast\|\leq\frac{\sigma_1}{\mu\sqrt b}.\label{eq:empirical2}
    \end{gather}
\end{corollary}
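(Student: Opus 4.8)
The plan is to derive the bias bound directly from the mean-squared-error bound of \cref{prop:empirical1} by applying Jensen's inequality twice. The essential observation is that the true solution $\bm x_k^\ast$ is deterministic, so it coincides with its own expectation and can be moved inside the expectation operator, turning the bias into the norm of an expectation of a centered quantity.

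First I would use linearity of expectation together with the fact that $\bm x_k^\ast$ is non-random to write $\E[\hat{\bm x}_k^\ast]-\bm x_k^\ast=\E[\hat{\bm x}_k^\ast-\bm x_k^\ast]$. Because the Euclidean norm is convex, Jensen's inequality applied to the $\R^d$-valued random variable $\hat{\bm x}_k^\ast-\bm x_k^\ast$ gives
\begin{align}
    \|\E[\hat{\bm x}_k^\ast]-\bm x_k^\ast\|=\|\E[\hat{\bm x}_k^\ast-\bm x_k^\ast]\|\leq\E[\|\hat{\bm x}_k^\ast-\bm x_k^\ast\|].
\end{align}

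Next I would pass from the first moment of the norm to its second moment: since $t\mapsto t^2$ is convex (equivalently, by the Cauchy--Schwarz inequality), we have $\E[\|\hat{\bm x}_k^\ast-\bm x_k^\ast\|]\leq(\E[\|\hat{\bm x}_k^\ast-\bm x_k^\ast\|^2])^{1/2}$. Substituting the bound $\E[\|\hat{\bm x}_k^\ast-\bm x_k^\ast\|^2]\leq\sigma_1^2/(\mu^2 b)$ from \cref{prop:empirical1} and taking the square root yields $\sigma_1/(\mu\sqrt b)$, which is exactly the claim.

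There is no genuine obstacle here; the only subtlety is recognizing that $\bm x_k^\ast$ is deterministic, which is what licenses writing the bias as the norm of an expectation so that Jensen applies. It is worth noting that this crude two-step argument produces only the $O(b^{-1/2})$ rate for the bias, whereas the sharper $O(b^{-1})$ rate mentioned earlier for $\Delta_1$ would require a separate, more delicate computation exploiting the specific structure of the estimator (for instance, an asymptotic expansion of $\hat{\bm x}_k^\ast$ around $\bm x_k^\ast$).
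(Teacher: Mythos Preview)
Your argument is correct and matches the paper's approach exactly: the paper simply states that the corollary follows by ``applying Jensen's inequality to \cref{prop:empirical1},'' and you have spelled out precisely the two uses of Jensen (for the convex norm and then for the square) that this entails. Your closing remark about the sharper $O(b^{-1})$ rate requiring a Taylor-type expansion is also spot on---that is exactly what \cref{prop:empirical3} does in the smooth case $\psi=0$.
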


When $\psi=0$, a tighter bound than \cref{cor:empirical2} can be obtained.

\begin{proposition}\label{prop:empirical3}
    Suppose that $\psi=0$, \cref{ass:existence,ass:gradient_variance,ass:strong_convexity,ass:smoothness} hold, $\E[\|\nabla_{\bm x\bm x}\ell(\bm x;\bm y)\|]$ exist for all $\bm x\in\R^d$, $\nabla_{\bm x\bm x}\ell(\bm x;\bm y)$ is $L_{3,0}$-Lipschitz continuous in $\bm x$, and there exists $\sigma_2\geq0$ such that
    \begin{align}
        \E\bracket*{\mynorm*{\nabla_{\bm x\bm x} \ell(\bm x_k^\ast;\bm y)-\E[\nabla_{\bm x\bm x} \ell(\bm x_k^\ast;\bm y)]}^2}&\leq\sigma_2^2        
    \end{align}
    for all $k\geq0$.
    Then, the following holds for all $k\geq0$:
    \begin{gather}
        \|\E[\hat {\bm x}_k^\ast]-\bm x_k^\ast\|\leq \prn*{\frac{L_{3,0}\sigma_1^2}{2\mu^3}+\frac{\sigma_1\sigma_2}{\mu^2}}\frac1b\label{eq:empirical3}
    \end{gather}
\end{proposition}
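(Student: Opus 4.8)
The plan is to exploit the first-order optimality conditions together with a second-order Taylor expansion, the point being that the $O(b^{-1/2})$ fluctuation of $\hat{\bm x}_k^\ast$ about $\bm x_k^\ast$ is \emph{unbiased}, so only genuinely second-order effects survive and these are $O(1/b)$. Write $\hat H\coloneqq\nabla^2\hat\ell_k(\bm x_k^\ast)$ and $\bar H\coloneqq\nabla^2 F(\bm x_k^\ast)=\E[\nabla_{\bm x\bm x}\ell(\bm x_k^\ast;\bm y)]$, where $F\coloneqq\E[\ell(\cdot;\bm y)]$ and the last equality is \cref{cor:hessian_expectation}. By \cref{ass:strong_convexity} both satisfy $\hat H,\bar H\succeq\mu I$, hence are invertible with $\|\hat H^{-1}\|,\|\bar H^{-1}\|\le 1/\mu$. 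Set $\bm\delta\coloneqq\hat{\bm x}_k^\ast-\bm x_k^\ast$.

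First I would use stationarity: $\nabla\hat\ell_k(\hat{\bm x}_k^\ast)=0$ by definition of $\hat{\bm x}_k^\ast$, and $\nabla F(\bm x_k^\ast)=0$ by definition of $\bm x_k^\ast$ (valid since $\psi=0$). Expanding $\nabla\hat\ell_k$ around $\bm x_k^\ast$ with an integral remainder gives $0=\nabla\hat\ell_k(\bm x_k^\ast)+\hat H\bm\delta+\bm r$, where $\bm r=\int_0^1[\nabla^2\hat\ell_k(\bm x_k^\ast+s\bm\delta)-\hat H]\bm\delta\,ds$ obeys $\|\bm r\|\le\frac{L_{3,0}}2\|\bm\delta\|^2$, since the empirical Hessian inherits the $L_{3,0}$-Lipschitz constant as an average. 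Solving yields $\bm\delta=-\hat H^{-1}\nabla\hat\ell_k(\bm x_k^\ast)-\hat H^{-1}\bm r$, into which I would substitute the resolvent identity $\hat H^{-1}=\bar H^{-1}-\bar H^{-1}(\hat H-\bar H)\hat H^{-1}$.

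The cancellation occurs upon taking expectations: the leading term $-\bar H^{-1}\E[\nabla\hat\ell_k(\bm x_k^\ast)]$ vanishes, because $\bar H$ is deterministic and $\E[\nabla\hat\ell_k(\bm x_k^\ast)]=\nabla F(\bm x_k^\ast)=0$ by \cref{lem:gradient_expectation}. Thus $\E[\bm\delta]=\E[\bar H^{-1}(\hat H-\bar H)\hat H^{-1}\nabla\hat\ell_k(\bm x_k^\ast)]-\E[\hat H^{-1}\bm r]$. The remainder term is bounded using $\|\hat H^{-1}\|\le1/\mu$ and \cref{prop:empirical1}: $\|\E[\hat H^{-1}\bm r]\|\le\frac1\mu\cdot\frac{L_{3,0}}2\E[\|\bm\delta\|^2]\le\frac{L_{3,0}\sigma_1^2}{2\mu^3 b}$, the first summand. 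For the coupling term I would apply $\|\bar H^{-1}\|,\|\hat H^{-1}\|\le1/\mu$ and Cauchy–Schwarz to obtain $\frac1{\mu^2}\sqrt{\E\|\hat H-\bar H\|^2}\sqrt{\E\|\nabla\hat\ell_k(\bm x_k^\ast)\|^2}$; the gradient factor is $\le\sigma_1^2/b$ exactly as in the variance computation of \cref{prop:empirical1} (using $\nabla F(\bm x_k^\ast)=0$ and \cref{ass:gradient_variance}), while the Hessian factor is $\le\sigma_2^2/b$ because $\hat H-\bar H$ is an average of $b$ i.i.d.\ mean-zero matrices with second moment at most $\sigma_2^2$. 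This gives $\frac{\sigma_1\sigma_2}{\mu^2 b}$, the second summand, and adding the two reproduces \cref{eq:empirical3}.

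The conceptual heart of the argument, and the main obstacle, is recognizing that the naive estimate via $\E\|\bm\delta\|$ only yields $O(b^{-1/2})$, and that obtaining the sharper $O(1/b)$ forces a second-order expansion in which the first-order term must be shown to be exactly unbiased. A secondary technical point is matrix-norm bookkeeping: the i.i.d.\ variance identity for $\hat H-\bar H$ relies on an inner-product (Frobenius) norm, so I would carry that estimate in $\|\cdot\|_{\mathrm F}$ and pass to the operator norm via $\|\cdot\|_{\mathrm{op}}\le\|\cdot\|_{\mathrm F}$ when bounding $\|(\hat H-\bar H)\bm v\|$.
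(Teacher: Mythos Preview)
Your proposal is correct and follows essentially the same route as the paper: both arguments Taylor-expand $\nabla\hat\ell_k$ at $\bm x_k^\ast$ to write $\bm\delta+\hat H^{-1}\nabla\hat\ell_k(\bm x_k^\ast)$ as an $O(\|\bm\delta\|^2)$ remainder, apply the resolvent identity $\hat H^{-1}=\bar H^{-1}-\bar H^{-1}(\hat H-\bar H)\hat H^{-1}$, kill the leading term via $\E[\nabla\hat\ell_k(\bm x_k^\ast)]=0$, and finish with Cauchy--Schwarz plus the i.i.d.\ variance bounds $\sigma_1^2/b$ and $\sigma_2^2/b$. Your remark about carrying the Hessian variance in the Frobenius norm (then passing to operator norm) is a welcome clarification of a point the paper leaves implicit.
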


\begin{proof}
    The Lipschitz continuity of $\nabla_{\bm x\bm x}\ell(\bm x;\bm y)$ gives
    \begin{gather}
        \|\nabla^2\hat\ell_k(\bm x_k^\ast)(\hat {\bm x}_k^\ast-\bm x_k^\ast)+\nabla\hat\ell_k(\bm x_k^\ast)-\nabla\hat\ell_k(\hat {\bm x}_k^\ast)\|\leq\frac{L_{3,0}}2\|\hat {\bm x}_k^\ast-\bm x_k^\ast\|^2
    \end{gather}
    \citep[Lemma 4.1.1]{nesterov2018lectures}.
    From the optimality condition $\nabla\hat\ell_k(\hat {\bm x}_k^\ast)=0$ and \cref{eq:strong_convexity3} with $\varphi=\hat\ell_k$, we have
    \begin{align}
        \|\hat {\bm x}_k^\ast-\bm x_k^\ast+\nabla^2\hat\ell_k(\bm x_k^\ast)^{-1}\nabla\hat\ell_k(\bm x_k^\ast)\|
        &\leq\|\nabla^2\hat\ell_k(\bm x_k^\ast)^{-1}\|\|\nabla^2\hat\ell_k(\bm x_k^\ast)(\hat {\bm x}_k^\ast-\bm x_k^\ast)+\nabla\hat\ell_k(\bm x_k^\ast)\|\\
        &\leq\frac{L_{3,0}}{2\mu}\|\hat {\bm x}_k^\ast-\bm x_k^\ast\|^2.\label{eq:empirical3_1}
    \end{align}
    Therefore, we get 
    \begin{align}
        \|\E[\hat {\bm x}_k^\ast]-\bm x_k^\ast\|
        &\leq\|\E[\hat {\bm x}_k^\ast-\bm x_k^\ast+\nabla^2\hat\ell_k(\bm x_k^\ast)^{-1}\nabla\hat\ell_k(\bm x_k^\ast)]\|+\|\E[\nabla^2\hat\ell_k(\bm x_k^\ast)^{-1}\nabla\hat\ell_k(\bm x_k^\ast)]\|\\
        &\leq \E[\|\hat {\bm x}_k^\ast-\bm x_k^\ast+\nabla^2\hat\ell_k(\bm x_k^\ast)^{-1}\nabla\hat\ell_k(\bm x_k^\ast)\|]+\|\E[\nabla^2\hat\ell_k(\bm x_k^\ast)^{-1}\nabla\hat\ell_k(\bm x_k^\ast)]\|\\
        &\leq \frac{L_{3,0}}{2\mu}\E[\|\hat {\bm x}_k^\ast-\bm x_k^\ast\|^2]+\|\E[\nabla^2\hat\ell_k(\bm x_k^\ast)^{-1}\nabla\hat\ell_k(\bm x_k^\ast)]\|\\
        &\leq \frac{L_{3,0}\sigma_1^2}{2\mu^3b}+\|\E[\nabla^2\hat\ell_k(\bm x_k^\ast)^{-1}\nabla\hat\ell_k(\bm x_k^\ast)]\|,
    \end{align}
    where the first inequality follows from the triangle inequality, the second from Jensen's inequality, the third from \cref{eq:empirical3_1}, and the last from \cref{prop:empirical1}.

    To finish the proof, we bound the second term.
    Let $H\coloneqq\nabla^2f(\bm x_k^\ast)$, $\hat H\coloneqq\nabla^2\hat\ell_k(\bm x_k^\ast)$, $\Delta_H\coloneqq \hat H-H$, and $\hat {\bm g}\coloneqq\nabla \hat\ell_k(\bm x_k^\ast)$.
    Then, we have
    \begin{align}
        \hat H^{-1}
        &=H^{-1}-H^{-1}\hat H\hat H^{-1}+\hat H^{-1}\\
        &=H^{-1}-H^{-1}(\hat H-H)\hat H^{-1}\\
        &=H^{-1}-H^{-1}\Delta_H\hat H^{-1}.
    \end{align}
    Hence, applying the triangle inequality gives
    \begin{align}
        \|\E[\nabla^2\hat\ell_k(\bm x_k^\ast)^{-1}\nabla\hat\ell_k(\bm x_k^\ast)]\|
        =\|\E[\hat H^{-1}\hat {\bm g}]\|
        &\leq\|\E[H^{-1}\hat {\bm g}]\|+\|\E[H^{-1}\Delta_H\hat H^{-1}\hat {\bm g}]\|\\
        &=\|\E[H^{-1}\Delta_H\hat H^{-1}\hat {\bm g}]\|,
    \end{align}
    where we used $\E[\hat {\bm g}]=0$ in the last equality.
    This can be bounded as follows:
    \begin{align}
        \|\E[H^{-1}\Delta_H\hat H^{-1}\hat {\bm g}]\|
        &\leq\E[\|H^{-1}\Delta_H\hat H^{-1}\hat {\bm g}\|]\\
        &\leq\frac1{\mu^2}\E[\|\Delta_H\|\|\hat {\bm g}\|]\\
        &\leq\frac1{\mu^2}\sqrt{\E[\|\Delta_H\|^2]}\sqrt{\E[\|\hat {\bm g}\|^2]},
    \end{align}
    where the first inequality follows from Jensen's inequality, the second from \cref{eq:strong_convexity3} with $\varphi=f,\hat\ell_k$, and the last from the Cauchy--Schwarz inequality.
    By the similar discussion as \cref{eq:empirical1_1}, we have $\E[\|\hat {\bm g}\|^2]\leq\sigma_1^2/b$ and $\E[\|\Delta_H\|^2]\leq\sigma_2^2/b$.
    To get the latter bound, we used \cref{cor:hessian_expectation}.

    By combining these inequalities, we obtain
    \begin{align}
        \|\E[\hat {\bm x}_k^\ast]-{\bm x}_k^\ast\|
        &\leq \prn*{\frac{L_{3,0}\sigma_1^2}{2\mu^3}+\frac{\sigma_1\sigma_2}{\mu^2}}\frac1b,
    \end{align}
    which completes the proof.
\end{proof}

\subsection{Drift of the Optimal Solution}\label{sec:drift}
This subsection gives a bound on the drift of the optimal solution $\bm x^\ast(t)$ by using the Wasserstein-1 distance.
The idea is based on \citet[Theorem 3.5 (a)]{perdomo2020performative}.

\begin{proposition}\label{prop:drift}
    Suppose \cref{ass:existence,ass:smoothness,ass:strong_convexity} and that $\nabla_{\bm x}\ell(\bm x;\cdot)$ is $L_{1,1}$-Lipschitz continuous.
    Then, for all $k,k'\geq0$, we have
    \[
        \|\bm x_k^\ast-\bm x_{k'}^\ast\|\leq\frac{L_{1,1}}{\mu}W_1(\mathcal P_k,\mathcal P_{k'}),
    \]
    where $\mathcal P_k\coloneqq\mathcal P(t_k)$.
\end{proposition}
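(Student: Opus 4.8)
The plan is to bound the distance between the two optimal solutions by comparing their optimality conditions and exploiting strong convexity. Recall that $\bm x_k^\ast$ minimizes $f(\cdot;t_k) = \E_{\bm y\sim\mathcal P_k}[\ell(\cdot;\bm y)]$ and $\bm x_{k'}^\ast$ minimizes $f(\cdot;t_{k'}) = \E_{\bm y\sim\mathcal P_{k'}}[\ell(\cdot;\bm y)]$ (the $\psi$ term is absent here since we are bounding the true optimizers of the smooth parts, or $\psi=0$). First I would invoke \cref{lem:gradient_expectation} to write $\nabla f(\bm x;t_k) = \E_{\bm y\sim\mathcal P_k}[\nabla_{\bm x}\ell(\bm x;\bm y)]$, so the first-order optimality conditions read $\E_{\bm y\sim\mathcal P_k}[\nabla_{\bm x}\ell(\bm x_k^\ast;\bm y)] = \bm 0$ and similarly for $k'$.

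Next I would apply the strong-convexity inequality \cref{eq:strong_convexity2} from \cref{lem:strong_convexity} to the function $f(\cdot;t_k) = \E_{\bm y\sim\mathcal P_k}[\ell(\cdot;\bm y)]$, which is $\mu$-strongly convex by \cref{ass:strong_convexity}. This gives
\begin{align}
    \mu\|\bm x_k^\ast-\bm x_{k'}^\ast\|^2
    &\leq\inner{\nabla f(\bm x_k^\ast;t_k)-\nabla f(\bm x_{k'}^\ast;t_k)}{\bm x_k^\ast-\bm x_{k'}^\ast}.
\end{align}
Since $\nabla f(\bm x_k^\ast;t_k)=\bm 0$ by optimality, the right-hand side reduces to $-\inner{\nabla f(\bm x_{k'}^\ast;t_k)}{\bm x_k^\ast-\bm x_{k'}^\ast}$. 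I would then insert the other optimality condition $\nabla f(\bm x_{k'}^\ast;t_{k'})=\bm 0$ to rewrite this as $\inner{\nabla f(\bm x_{k'}^\ast;t_{k'})-\nabla f(\bm x_{k'}^\ast;t_k)}{\bm x_k^\ast-\bm x_{k'}^\ast}$, and bound it by Cauchy--Schwarz, yielding $\mu\|\bm x_k^\ast-\bm x_{k'}^\ast\|\leq\|\nabla f(\bm x_{k'}^\ast;t_{k'})-\nabla f(\bm x_{k'}^\ast;t_k)\|$ after dividing by the common factor.

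The crux is then to control $\|\nabla f(\bm x;t_{k'})-\nabla f(\bm x;t_k)\| = \mynorm*{\E_{\bm y\sim\mathcal P_{k'}}[\nabla_{\bm x}\ell(\bm x;\bm y)]-\E_{\bm y\sim\mathcal P_k}[\nabla_{\bm x}\ell(\bm x;\bm y)]}$ by the Wasserstein-1 distance. This is the main obstacle and where the $L_{1,1}$-Lipschitz assumption on $\bm y\mapsto\nabla_{\bm x}\ell(\bm x;\bm y)$ enters. The idea (following \citet[Theorem 3.5]{perdomo2020performative}) is that for a function $\bm y\mapsto \bm G(\bm y)\coloneqq\nabla_{\bm x}\ell(\bm x;\bm y)$ that is $L_{1,1}$-Lipschitz, the difference of expectations under two distributions is controlled by $L_{1,1}$ times the Wasserstein-1 distance. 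Concretely, for any coupling $\pi\in\Pi(\mathcal P_k,\mathcal P_{k'})$,
\begin{align}
    \mynorm*{\E_{\mathcal P_{k'}}[\bm G(\bm y)]-\E_{\mathcal P_k}[\bm G(\bm y)]}
    &=\mynorm*{\int(\bm G(\bm y')-\bm G(\bm y))\,d\pi(\bm y,\bm y')}\\
    &\leq\int\|\bm G(\bm y')-\bm G(\bm y)\|\,d\pi(\bm y,\bm y')
    \leq L_{1,1}\int\|\bm y'-\bm y\|\,d\pi(\bm y,\bm y').
\end{align}
Taking the infimum over all couplings $\pi$ and using the definition of $W_1$ gives $\|\nabla f(\bm x;t_{k'})-\nabla f(\bm x;t_k)\|\leq L_{1,1}W_1(\mathcal P_k,\mathcal P_{k'})$. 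Combining this with the strong-convexity bound and dividing by $\mu$ yields the claim. I expect the only delicate points to be justifying the interchange of expectation and the coupling representation (handled cleanly by the definition of $W_1$ as an infimum over couplings) and confirming that $\bm G$ being Lipschitz uniformly in $\bm y$ suffices.
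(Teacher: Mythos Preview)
Your argument is sound when $\psi=0$, but the proposition is stated (and used elsewhere in the paper) for the general setting where $\bm x_k^\ast=\argmin_{\bm x}\{\E_{\bm y\sim\mathcal P_k}[\ell(\bm x;\bm y)]+\psi(\bm x)\}$; your parenthetical ``the $\psi$ term is absent here since we are bounding the true optimizers of the smooth parts'' is a misreading of the definition of $\bm x^\ast(t)$. With a nonsmooth $\psi$ the first-order conditions are not $\E_{\bm y\sim\mathcal P_k}[\nabla_{\bm x}\ell(\bm x_k^\ast;\bm y)]=\bm 0$ but rather $-\E_{\bm y\sim\mathcal P_k}[\nabla_{\bm x}\ell(\bm x_k^\ast;\bm y)]\in\partial\psi(\bm x_k^\ast)$, so the cancellation you perform after applying strong convexity no longer holds. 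The fix is to replace those two ``$=\bm 0$'' identities by a single inequality, namely \cref{lem:optimality_property}, which from monotonicity of $\partial\psi$ gives
\[
\inner{\E_{\bm y\sim\mathcal P_k}[\nabla_{\bm x}\ell(\bm x_k^\ast;\bm y)]-\E_{\bm y\sim\mathcal P_{k'}}[\nabla_{\bm x}\ell(\bm x_{k'}^\ast;\bm y)]}{\bm x_k^\ast-\bm x_{k'}^\ast}\leq 0.
\]
With this inequality in place, your remaining steps (Cauchy--Schwarz followed by the Wasserstein bound) go through unchanged, and this is exactly how the paper proceeds.

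On a minor methodological point: your coupling argument for the Wasserstein step is equivalent to the paper's route via Kantorovich--Rubinstein duality applied to the scalar function $\tilde\ell(\bm y)=\inner{\nabla_{\bm x}\ell(\bm x_{k'}^\ast;\bm y)}{\bm x_k^\ast-\bm x_{k'}^\ast}$. Both exploit the same $L_{1,1}$-Lipschitz hypothesis and yield the same constant; your primal coupling formulation handles the vector-valued integrand directly, while the paper first projects onto a scalar and then invokes duality.
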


\begin{proof}
    \Cref{eq:strong_convexity2} gives
    \begin{gather}
        \mu\|\bm x_k^\ast-\bm x_{k'}^\ast\|^2
        \leq\inner{\nabla_{\bm x} \ell(\bm x_k^\ast;\bm y)-\nabla_{\bm x} \ell(\bm x_{k'}^\ast;\bm y)}{\bm x_k^\ast-\bm x_{k'}^\ast}.
    \end{gather}
    Taking the expectation with respect to $\bm y\sim \mathcal P_k$ yields
    \begin{align}
        \mu\|\bm x_k^\ast-\bm x_{k'}^\ast\|^2
        &\leq\inner{\E_{\bm y\sim \mathcal P_k}[\nabla_{\bm x} \ell(\bm x_k^\ast;\bm y)]-\E_{\bm y\sim \mathcal P_k}[\nabla_{\bm x} \ell(\bm x_{k'}^\ast;\bm y)]}{\bm x_k^\ast-\bm x_{k'}^\ast}.\label{eq:drift1}
    \end{align}

    Let $\tilde\ell(\bm y)\coloneqq\inner{\nabla_{\bm x} \ell(\bm x_{k'}^\ast;\bm y)}{\bm x_k^\ast-\bm x_{k'}^\ast}$.
    Then, we have
    \begin{align}
        \abs{\tilde\ell(\bm y)-\tilde\ell(\bm y')}
        &=\abs{\inner{\nabla_{\bm x} \ell(\bm x_{k'}^\ast;\bm y)-\nabla_{\bm x} \ell(\bm x_{k'}^\ast;\bm y')}{\bm x_k^\ast-\bm x_{k'}^\ast}}\\
        &\leq L_{1,1}\norm{\bm y-\bm y'}\norm{\bm x_k^\ast-\bm x_{k'}^\ast},
    \end{align}
    where the Cauchy--Schwarz inequality and $L_{1,1}$-Lipschitz continuity of $\nabla_{\bm x}\ell(\bm x;\cdot)$ is used, which implies $\tilde\ell$ is $L_{1,1}\|\bm x_k^\ast-\bm x_{k'}^\ast\|$-Lipschitz continuous.
    Kantorovich--Rubinstein duality \citep[Particular Case 5.16]{villani2008optimal} gives
    \begin{align}
        L_{1,1}\|\bm x_k^\ast-\bm x_{k'}^\ast\|W_1(\mathcal P_k,\mathcal P_{k'})
        &=L_{1,1}\|\bm x_k^\ast-\bm x_{k'}^\ast\|\sup_{\varphi\in\mathrm{Lip}_1(\R^{d_y})}\set*{\E_{\bm y\sim \mathcal P_{k'}}[\varphi(\bm y)]-\E_{\bm y\sim \mathcal P_k}[\varphi(\bm y')]}.\\
        &\geq\E_{\bm y\sim \mathcal P_{k'}}[\tilde\ell(\bm y)]-\E_{\bm y\sim \mathcal P_k}[\tilde\ell(\bm y')]\\
        &=\inner{\E_{\bm y\sim \mathcal P_{k'}}[\nabla_{\bm x} \ell(\bm x_{k'}^\ast;\bm y)]-\E_{\bm y\sim \mathcal P_k}[\nabla_{\bm x} \ell(\bm x_{k'}^\ast;\bm y)]}{\bm x_k^\ast-\bm x_{k'}^\ast}.\label{eq:drift2}
    \end{align}

    By combining \cref{eq:drift1,eq:drift2}, we obtain
    \begin{align}
        \mu\|\bm x_k^\ast-\bm x_{k'}^\ast\|^2
        &\leq \inner{\E_{\bm y\sim \mathcal P_k}[\nabla_{\bm x} \ell(\bm x_k^\ast;\bm y)]-\E_{\bm y\sim \mathcal P_k}[\nabla_{\bm x} \ell(\bm x_{k'}^\ast;\bm y)]}{\bm x_k^\ast-\bm x_{k'}^\ast}\\
        &\leq L_{1,1}\|\bm x_k^\ast-\bm x_{k'}^\ast\|W_1(\mathcal P_k,\mathcal P_{k'})+\inner{\E_{\bm y\sim \mathcal P_k}[\nabla_{\bm x} \ell(\bm x_k^\ast;\bm y)]-\E_{\bm y\sim \mathcal P_{k'}}[\nabla_{\bm x} \ell(\bm x_{k'}^\ast;\bm y)]}{\bm x_k^\ast-\bm x_{k'}^\ast}\\
        &\leq L_{1,1}\|\bm x_k^\ast-\bm x_{k'}^\ast\|W_1(\mathcal P_k,\mathcal P_{k'}),
    \end{align}
    where \cref{lem:gradient_expectation,lem:optimality_property} are used in the last inequality.
\end{proof}



\subsection{Omitted Proofs in Tracking Error Analysis}

\begin{lemma}\label{lem:recursion_lem}
    Let $a_0\in\R$, $a_{i}\geq0$, and $a\coloneqq\sum_{i=1}^na_{i}<1$.
    Suppose $\{e_k\}_{k=0}^\infty$ satisfies 
    \begin{align}
        e_k\leq\sum_{i=1}^na_{i}e_{k-i}+a_0\label{eq:recursion_lem1}
    \end{align}
    for all $k\geq n$.
    Then, there exists $C\geq0$ such that, for all $k\geq0$,
    \begin{align}
        e_k\leq \frac{a_0}{1-a}+Ca^{k/n}.\label{eq:recursion_lem2}
    \end{align}
\end{lemma}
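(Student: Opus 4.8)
The plan is to reduce the affine recursion to a homogeneous one and then bound the homogeneous sequence by a geometric sequence of ratio $a^{1/n}$ via strong induction. First I would introduce the shifted sequence $f_k := e_k - \frac{a_0}{1-a}$, where $\frac{a_0}{1-a}$ is the unique fixed point of the scalar map $x \mapsto ax + a_0$. Substituting $e_k = f_k + \frac{a_0}{1-a}$ into \cref{eq:recursion_lem1} and using $\sum_{i=1}^n a_i = a$, the constant terms cancel exactly (since $a\cdot\frac{a_0}{1-a} + a_0 = \frac{a_0}{1-a}$), leaving the homogeneous inequality $f_k \le \sum_{i=1}^n a_i f_{k-i}$ for all $k \ge n$. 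It then suffices to show $f_k \le C a^{k/n}$ for all $k \ge 0$, since adding back $\frac{a_0}{1-a}$ yields \cref{eq:recursion_lem2}.

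Assuming $a \in (0,1)$ (the case $a=0$ is degenerate: the recursion forces $e_k \le a_0 = \frac{a_0}{1-a}$ for $k \ge n$, which already delivers the claimed $\limsup$), I set $\rho := a^{1/n} \in (0,1)$ and prove $f_k \le C\rho^k$ by strong induction on $k$. The constant is chosen to absorb the finitely many initial terms: $C := \max\{0,\ \max_{0 \le j \le n-1} f_j \rho^{-j}\}$, which is nonnegative and makes the bound hold for $k = 0,\ldots,n-1$. For the inductive step with $k \ge n$, I would estimate
\[
    f_k \le \sum_{i=1}^n a_i f_{k-i} \le \sum_{i=1}^n a_i\, C\rho^{k-i} = C\rho^k \sum_{i=1}^n a_i \rho^{-i},
\]
where the second inequality uses $a_i \ge 0$ together with the induction hypothesis applied to each index $k-i \in \{k-n,\ldots,k-1\} \subseteq \{0,\ldots,k-1\}$. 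The proof closes once I verify $\sum_{i=1}^n a_i \rho^{-i} \le 1$: since $\rho < 1$ and $1 \le i \le n$, monotonicity gives $\rho^{-i} \le \rho^{-n} = a^{-1}$, whence $\sum_{i=1}^n a_i \rho^{-i} \le a^{-1}\sum_{i=1}^n a_i = 1$.

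The step I expect to need the most care is precisely the identification of the correct decay rate, i.e., confirming that $\rho = a^{1/n}$ is small enough that $\sum_i a_i \rho^{-i} \le 1$; this is what forces the exponent $k/n$ in the statement and explains why the rate depends on $n$. A secondary subtlety is the sign handling: because $f_{k-i}$ may be negative, the bound $a_i f_{k-i} \le a_i C\rho^{k-i}$ relies essentially on the hypothesis $a_i \ge 0$, so I would keep all inequalities oriented correctly rather than passing to absolute values. Everything else — the fixed-point cancellation and the base-case choice of $C$ — is routine.
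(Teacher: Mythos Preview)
Your proof is correct and follows essentially the same strategy as the paper: subtract the fixed point $\frac{a_0}{1-a}$ to reduce to a homogeneous recursion, then establish geometric decay at rate $a^{1/n}$ by strong induction, the key estimate being $\sum_{i=1}^n a_i\rho^{-i}\le a\rho^{-n}=1$. Your base-case constant $C=\max\{0,\max_{0\le j\le n-1}f_j\rho^{-j}\}$ is in fact slightly more careful than the paper's choice $C=\max\{0,\tilde e_0,\ldots,\tilde e_{n-1}\}$, since the extra factor $\rho^{-j}$ is exactly what makes $f_j\le C\rho^{j}$ hold for $j=1,\ldots,n-1$.
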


\begin{proof}
    Let 
    \begin{align}
        \tilde e_k\coloneqq\begin{dcases*}
            e_k-\frac{a_0}{1-a}&if $k\leq n-1$\\
            \sum_{i=1}^na_{i}\tilde e_{k-i}&if $k\geq n$.
        \end{dcases*}\label{eq:recursion_lem3}
    \end{align}
    We first prove 
    \begin{align}
        e_k\leq\tilde e_k+\frac{a_0}{1-a}\label{eq:recursion_lem4}
    \end{align}
    for all $k\geq0$ by induction.
    For $k\leq n-1$, \cref{eq:recursion_lem4} holds trivially.
    Assume that there exists $k'\geq n-1$ such that \cref{eq:recursion_lem4} holds for all $k\leq k'$.
    Then, for $k=k'+1$, we have
    \begin{align}
        e_{k'+1}
        &\leq\sum_{i=1}^na_{i}e_{k'+1-i}+a_0\\
        &\leq\sum_{i=1}^na_{i}\prn*{\tilde e_{k'+1-i}+\frac{a_0}{1-a}}+a_0\\
        &=\tilde e_{k'+1}+\frac{a_0}{1-a}\sum_{i=1}^na_{i}+ a_0\\
        &=\tilde e_{k'+1}+\frac{a_0}{1-a},
    \end{align}
    where the first inequality follows from \cref{eq:recursion_lem1} and the second from the induction hypothesis.

    Next, we prove that, by taking $C\coloneqq\max\{0,\tilde e_0,\tilde e_1,\ldots,\tilde e_{n-1}\}\geq0$, we have
    \begin{align}
        \tilde e_k\leq Ca^{k/n}\label{eq:recursion_lem5}
    \end{align}
    for all $k\geq 0$ by induction.
    For $k\leq n-1$, \cref{eq:recursion_lem5} holds trivially.
    Assume that there exists $k'\geq n-1$ such that \cref{eq:recursion_lem5} holds for all $k\leq k'$.
    Then, for $k=k'+1$, we have
    \begin{align}
        \tilde e_{k'+1}
        =\sum_{i=1}^na_{i}\tilde e_{k'+1-i}
        &\leq C\sum_{i=1}^na_{i}a^{(k'+1-i)/n}\\
        &\leq C\sum_{i=1}^na_{i}a^{(k'+1-n)/n}\\
        &= Ca^{(k'+1)/n},
    \end{align}
    where the first inequality follows from the induction hypothesis, the second from $a<1$.

    Combining \cref{eq:recursion_lem4,eq:recursion_lem5} completes the proof.
\end{proof}



\section{Omitted Proofs in \cref{sec:choices_of_coefficients}}

\subsection{Proof of \cref{lem:alpha_norm}}\label{sec:proof_of_lem_alpha_norm}
\begin{proof}
    The matrix $M$ in \cref{eq:alpha_matrix} has the following form:
    \begin{align}
        M
        &=\mqty(
            n&\frac{n^2}2+O(n)&\cdots&\frac{n^p}p+O(n^{p-1})\\
            \frac{n^2}2+O(n)&\frac{n^3}3+O(n^2)&\cdots&\frac{n^{p+1}}{p+1}+O(n^{p})\\
            \vdots&\vdots&&\vdots\\
            \frac{n^p}p+O(n^{p-1})&\frac{n^{p+1}}{p+1}+O(n^{p})&\cdots&\frac{n^{2p-1}}{2p-1}+O(n^{2p-2})
        ).
    \end{align}
    Let 
    \begin{align}
        \tilde M
        &\coloneqq\mqty(
            1&\frac12&\cdots&\frac1p\\
            \frac12&\frac13&\cdots&\frac1{p+1}\\
            \vdots&\vdots&&\vdots\\
            \frac1p&\frac1{p+1}&\cdots&\frac{1}{2p-1}
        ).
    \end{align}
    This is invertible and the inverse of $\tilde M$ has the following explicit form \citep[Lemma 2.1]{trench1966inversion}:
    \begin{align}
        ({\tilde M}^{-1})_{i,j}
        &=(-1)^{i+j}\frac{(p+i-1)!(p+j-1)!}{(i+j-1)(p-i)!(p-j)!((i-1)!)^2((j-1)!)^2},
    \end{align}
    which is non-zero for all $i,j=1,\ldots,p$.
    From this fact and the cofactor expansion, we can see that the inverse of $M$ has the following form:
    \begin{align}
        (M^{-1})_{i,j}
        &=({\tilde M}^{-1})_{i,j}n^{-i-j+1}+O(n^{-i-j}).
    \end{align}
    Replacing $M^{-1}$ in \cref{eq:alpha_matrix} by this expression, we have
    \begin{align}
        \alpha_{i}
        &=\sum_{j=1}^{p}i^{j-1}(M^{-1})_{j,1}
        =\sum_{j=1}^{p}i^{j-1}\prn*{({\tilde M}^{-1})_{j,1}n^{-j}+O(n^{-j-1})}.
    \end{align}
    The fundamental theorem of algebra guarantees that there exists $i\in[n]$ such that $\sum_{j=1}^{p}i^{j-1}({\tilde M}^{-1})_{j,1}\neq0$.
    Therefore, we have
    \begin{align}
        n\|\bm \alpha\|^2
        &=n\sum_{i=1}^n\prn*{\sum_{j=1}^{p}i^{j-1}\prn*{({\tilde M}^{-1})_{j,1}n^{-j}+O(n^{-j-1})}}^2\\
        &=\sum_{i=1}^n\sum_{j=1}^{p}\sum_{l=1}^{p}i^{j+l-2}\prn*{({\tilde M}^{-1})_{j,1}({\tilde M}^{-1})_{l,1}n^{-j-l+1}+O(n^{-j-l})}\\
        &=\sum_{j=1}^{p}\sum_{l=1}^{p}\prn*{\frac{n^{j+l-1}}{j+l-1}+O(n^{j+l-2})}\prn*{({\tilde M}^{-1})_{j,1}({\tilde M}^{-1})_{l,1}n^{-j-l+1}+O(n^{-j-l})}\\
        &=\sum_{j=1}^{p}\sum_{l=1}^{p}\frac{({\tilde M}^{-1})_{j,1}({\tilde M}^{-1})_{l,1}}{j+l-1}+O(n^{-1}),
    \end{align}
    which implies $\|\bm \alpha\|=O(1/\sqrt n)$.
    Furthermore, $\|\bm \alpha\|_1=O(1)$ follows from the Cauchy--Schwarz inequality.
\end{proof}

\subsection{Proof of \cref{lem:smooth_solution}}\label{sec:proof_of_lem_smooth_solution}
\begin{proof}
    Define the forward shift operator $S$ acting on a sequence $\{\bm a_k\}_{k=0}^\infty$ by $(S\{\bm a_k\})_k\coloneqq\bm a_{k+1}$.
    For the solution sequence $\{\bm x_k^\ast\}_{k=0}^\infty$, consider the operator polynomial
    \begin{align}
        P(S)\coloneqq\sum_{i=1}^n\alpha_{i}S^{n-i}-S^n.
    \end{align}
    so that $X_k^\ast\bm\alpha-\bm x_k^\ast=\prn*{P(S)\{\bm x_k^\ast\}}_{k-n}$.

    First, we show that the polynomial $P(z)$ has a root at $z=1$ with multiplicity at least $p$.
    This is equivalent to showing that $P^{(j)}(1)=0$ for all $j\in[0,p-1]$.
    Differentiating $P(z)$ gives
    \begin{align}
        P^{(j)}(1)
        =\dv[j]{z}\prn*{\sum_{i=1}^n\alpha_{i} z^{n-i}-z^n}\Bigg|_{z=1}
        &=\sum_{i=1}^{n}\alpha_iQ_j(-i)-Q_j(0)
    \end{align}
    for all $j\in[0,p-1]$, where $Q_j(z)\coloneqq\prod_{k=0}^{j-1}(n-k+z)$.
    Rewriting this by defining $Q_j(z)\eqqcolon\sum_{m=0}^{p-1} q_{j,m} z^m$ gives
    \begin{align}
        P^{(j)}(1)
        &=\mqty(q_{j,0}&q_{j,1}&\cdots&q_{j,p-1})\mqty(
            (-1)^0&(-2)^0&\cdots&(-n)^0\\
            (-1)^1&(-2)^1&\cdots&(-n)^1\\
            \vdots&\vdots&&\vdots\\
            (-1)^{p-1}&(-2)^{p-1}&\cdots&(-n)^{p-1}
        )\mqty(\alpha_1\\\alpha_2\\\vdots\\\alpha_n)-q_{j,0}\\
        &=\mqty(q_{j,0}&q_{j,1}&\cdots&q_{j,p-1})\Phi\bm\alpha-q_{j,0}.
    \end{align}
    This is zero because \cref{eq:sum_of_alpha} implies $\Phi\bm\alpha=\bm\phi(0)=\mqty(1&0&\cdots&0)^\top$.

    Next, we give a bound on $\|\prn*{P(S)\{\bm x_k^\ast\}}_{k-n}\|$.
    Since $P(z)$ has a root at $z=1$ with multiplicity at least $q\in[1,p]$, there exist $b_0,\ldots,b_{n-q}\in\R$ such that 
    \begin{align}
        P(z)
        &=(1-z)^q\sum_{i=0}^{n-q}b_iz^i.\label{eq:smooth_solution1}
    \end{align}
    Applying $P(S)$ to the sequence $\{\bm x_k^\ast\}_{k=0}^\infty$ and using \cref{eq:smooth_solution1} gives
    \begin{align}
        \prn*{P(S)\{\bm x_k^\ast\}}_{k-n}
        &=\prn*{(1-S)^q\sum_{i=0}^{n-q}b_iS^i\{\bm x_k^\ast\}}_{k-n}
        =\prn*{\sum_{i=0}^{n-q}\sum_{j=0}^qb_i(-1)^j\binom{q}{j}S^{i+j}\{\bm x_k^\ast\}}_{k-n}\\
        &=\sum_{i=0}^{n-q}\sum_{j=0}^qb_i(-1)^j\binom{q}{j}\bm x_{k-n+i+j}^\ast.
    \end{align}
    Taking the norm and using the triangle inequality yields
    \begin{align}
        \norm{\prn*{P(S)\{\bm x_k^\ast\}}_{k-n}}
        &\leq\sum_{i=0}^{n-q}|b_i|\norm{\sum_{j=0}^q(-1)^j\binom{q}{j}\bm x_{k-n+i+j}^\ast}.
    \end{align}
    By a property of the finite-difference (see \citet[Section 2.1]{kamijima2025simple}), we have
    \begin{align}
        \norm{\sum_{j=0}^q(-1)^j\binom{q}{j}\bm x_{k-n+i+j}^\ast}
        &\leq h^q\sup_{t\geq0}\|(\bm x^\ast)^{(q)}(t)\|.
    \end{align}
    Therefore, 
    \begin{align}
        \norm{\prn*{P(S)\{\bm x_k^\ast\}}_{k-n}}
        &\leq\sum_{i=0}^{n-q}|b_i|h^q\sup_{t\geq0}\|(\bm x^\ast)^{(q)}(t)\|,\label{eq:smooth_solution2}
    \end{align}

    Finally, we bound $\sum_{i=0}^{n-q}|b_i|$.
    Substituting $P(z)$ in \cref{eq:smooth_solution1} by the definition of $P$ gives
    \begin{align}
        \sum_{i=0}^{n-q}b_iz^i
        &=\prn*{\sum_{i=1}^n\alpha_{i} z^{n-i}-z^n}(1-z)^{-q}\\
        &=\prn*{\sum_{i=1}^n\alpha_{i} z^{n-i}-z^n}\prn*{\sum_{l=0}^\infty\binom{q+l-1}{q-1}z^l},
    \end{align}
    where we used the Taylor expansion of $(1-z)^{-q}$ in the last equality.
    By comparing the coefficients of $z^i$ on both sides for $i\in[0,n-q]$, we have
    \begin{align}
        b_i
        =\sum_{l=0}^i\binom {q+l-1}{q-1}\alpha_{n-i+l}.
    \end{align}
    Thus, the triangle inequality gives
    \begin{align}
        \sum_{i=0}^{n-q}|b_i|
        &\leq\sum_{i=0}^{n-q}\sum_{l=0}^i\binom {q+l-1}{q-1}|\alpha_{n-i+l}|
        =\sum_{l=0}^{n-q}\binom {q+l-1}{q-1}\sum_{i=l}^{n-q}|\alpha_{n-i+l}|.
    \end{align}
    Using $\sum_{i=l}^{n-q}|\alpha_{n-i+l}|\leq\|\bm\alpha\|_1$, we obtain
    \begin{align}
        \sum_{i=0}^{n-q}|b_i|
        &\leq\sum_{l=0}^{n-q}\binom {q+l-1}{q-1}\|\bm \alpha\|_1
        =\binom {n}{q}\|\bm \alpha\|_1.\label{eq:smooth_solution3}
    \end{align}
    Inserting \cref{eq:smooth_solution3} into \cref{eq:smooth_solution2} completes the proof.
\end{proof}

\end{document}